
\documentclass[final,leqno,onefignum,onetabnum]{siamltex1213}
\usepackage{algorithm}
\usepackage{overpic}
\usepackage{eepic}
\usepackage{graphicx}
\usepackage{subfigure}
\usepackage{algorithmic}
\usepackage{array}
\usepackage{bm}
\usepackage{amsfonts}
\usepackage{amsmath}
\usepackage{amssymb}
\usepackage{mathrsfs}
\DeclareMathOperator*{\argmin}{arg\,min}
\newtheorem{thm}{Theorem}[section]
\newtheorem{remark}[thm]{Remark}

\title{Low-rank Approximation of Tensors via Sparse Optimization}


\author{Xiaofei Wang\thanks{Key Laboratory for Applied Statistics of MOE, School of Mathematics and Statistics, Northeast Normal University, Renmin Street 5268, Changchun, China
(\email{wangxf341@nenu.edu.cn})}\and
Carmeliza Navasca\thanks{Department of Mathematics, University of Alabama at Birmingham, 1300 University Boulevard, Birmingham AL, USA
(\email{cnavasca@uab.edu})}}

\begin{document}
\maketitle
\slugger{sisc}{sisc}{xx}{x}{x--x}

\begin{abstract}
The goal of this paper is to find a low-rank approximation for a given tensor.
Specifically, we give a computable strategy on calculating the rank of a given tensor,
based on approximating the solution to an
NP-hard problem. In this paper, we formulate a sparse optimization problem via an $l_1$-regularization to find a low-rank approximation of tensors.
To solve this sparse optimization problem, we propose a rescaling algorithm of the proximal alternating minimization
and study the theoretical convergence of this algorithm.
Furthermore, we discuss the probabilistic consistency of the sparsity result
and suggest a way to choose the regularization parameter for practical computation.
In the simulation experiments, the performance of our algorithm supports that our method provides
an efficient estimate on the number of rank-one tensor components in a given tensor.
Moreover, this algorithm is also applied to surveillance videos for low-rank approximation.
\end{abstract}

\begin{keywords}
$l_1$-regularization, low-rank approximation, proximal alternating minimization, sparsity
\end{keywords}

\begin{AMS}
15A69, 65F30
\end{AMS}

\pagestyle{myheadings}
\thispagestyle{plain}
\markboth{LOW-RANK APPROXIMATION OF TENSORS}{XIAOFEI WANG AND CARMELIZA NAVASCA}

\section{Introduction}
We have seen the success of the matrix SVD for several decades. However in the advent of modern and massive datasets, even SVD has its limitation. 
Since tensors have been known to be a natural representation of higher-order and hierarchical dimensional datasets, we focus on the extension of low rank matrix approximation to tensors.  Tensors have received much attention in recent years in the areas of signal processing \cite{comon,sidiropoulos,sorensen}, computer vision \cite{hao,martin,savas,xu}, neuroscience \cite{beckmann,mart}, data science and machine learning \cite{kolda,xu,hao,acardunlavykolda}. Most of these applications rely on decomposing a tensor data into its low-rank form to be able to perform efficient computing as well as to reduce memory requirements. This type of tensor decomposition into a sum of rank-one tensor terms is called the canonical polyadic (CP) decomposition; thus, it is viewed as a generalization of the matrix SVD.  The generalization of matrix SVD to tensors is not unique. Another tensor decomposition is called the Higher-Order SVD \cite{lieventucker,kolda}, which is a product of orthogonal matrices with a dense core tensor. Higher-order SVD is considered another extension of the matrix SVD.

Unlike the matrix case where the low-rank matrix approximation is afforded by truncating away \emph{small} rank-one matrix terms \cite{EY}, discarding negligible rank-one tensor terms does not necessarily provide the best low-rank tensor approximation \cite{kolda2}. Moreover, most low rank tensor algorithms do not provide an estimation on the tensor rank; an a priori tensor rank is often required to find the decomposition. Several theoretical results \cite{kruskal,Landsberg} on tensor rank can help, but they are limited to low-multidimensional and low order tensors so they are inapplicable to tensors in real-life applications.
In fact, for real dataset, tensor rank is important. In a source apportionment data problem \cite{hopke}, the tensor rank of the data provides the number of pollution source profiles to be identified.
In this work, the focus is on finding an estimation of the tensor rank and its rank-one tensor decomposition (CP) of a given tensor. There are several numerical techniques \cite{comon,domanov,kolda,navasca,sidiropoulos} for approximating a $k$th rank tensor into its CP decomposition, but they do not give an approximation of the minimum rank. There are algorithms \cite{comon2,brachat} which give tensor rank, but they are specific to symmetric tensor decomposition over the complex field using algebraic geometry tools.

Our proposed algorithm addresses two difficult problems for the CP decomposition:
(a) one is that finding the rank of tensors is a NP-hard problem \cite{hillar},
and (b) the other is that tensors can be ill-posed \cite{silva} and failed to have their best low-rank approximations.

The tensor rank problem is formulated as an $l_0$ minimization problem; i.e.
\begin{eqnarray} \label{nphard}
\min\limits_{\bm{\alpha}} \|\bm{\alpha}\|_0 \quad\mbox{subject~to~}\   \mathcal{A}=[\bm{\alpha};\mathbf{X,Y,Z}]_R
\end{eqnarray}
where $\sum\limits_{r=1}^{R} \alpha_r\mathbf{x}_r\circ\mathbf{y}_r\circ\mathbf{z}_r=[\bm{\alpha};\mathbf{X,Y,Z}]_R$ represents a sum of the outer products of the vectors $\mathbf{x}_r, \mathbf{y}_r,$ and $\mathbf{z}_r$ for $r=1,\ldots,R$. Here $\|\bm{\alpha}\|_0$ corresponds the number of nonzero coefficients in the sum. However, this problem formulation (\ref{nphard}) is NP-hard. Inspired by the techniques in compressive sensing \cite{candes1,candes2,foucart},
we then consider an $l_1$-regularization formulation for tensor rank,
\begin{eqnarray} \label{l1reg}
\min\limits_{\bm{\alpha}} \|\bm{\alpha}\|_1 \quad\mbox{subject~to~}\   \mathcal{A}=[\bm{\alpha};\mathbf{X,Y,Z}]_R.
\end{eqnarray}
Here we denote $\Vert \bm{\alpha} \Vert_{1}= \sum_{i=1}^{R} \vert \alpha_i \vert$. It is well known in the compressed sensing community that minimizing the $\ell_1$ norm of the vector $\bm{\alpha}$ recovers the sparse solution of the linear system.
In the presence of noise, the constraint, $\mathcal{A}=[\bm{\alpha};\mathbf{X,Y,Z}]_R$ is replaced with $\Vert \mathcal{A}-[\bm{\alpha};\mathbf{X,Y,Z}]_R \Vert_F \leq\varepsilon$ where $\Vert \cdot \Vert_F$ is the Frobenius norm with $\Vert \mathcal{A} \Vert_F=(\sum \mathcal{A}_{ijk}^2)^{\frac{1}{2}}$. Moreover, to achieve a tensor decomposition as well as tensor rank, we minimize over the factor matrices, $\mathbf{X,Y,}$ and $\mathbf{Z}$, thus this minimization problem is considered:
\begin{eqnarray} \label{l1regnoise}
\min\limits_{\mathbf{X,Y,Z},\bm{\alpha}}\frac{1}{2}\|\mathcal{A}-[\bm{\alpha};\mathbf{X,Y,Z}]_R\|_F^2+\lambda\|\bm{\alpha}\|_1.
\end{eqnarray}
The $\ell_1$-regularization achieves a good approximation of tensor rank due to the sparsity structure and its tractability. In addition, the $l_1$-regularization term provides a restriction on the boundedness of the variables thereby ameliorating the ill-posedness of the best low-rank approximation of tensors. For more tractable computing, an alternative multi-block constraint optimization \cite{bolte} is implemented, which is similar to the technique discussed in \cite{xu}. Since (\ref{l1regnoise}) is a minimization of a sum of a smooth term
and a nonsmooth term, we consider the following optimization problem with smooth and non smooth terms:
\begin{eqnarray}\label{updatetech}
\mathbf{x}^{k+1}=\arg\min\limits_{\mathbf{x}} \{f(\mathbf{x}^k)+\langle\mathbf{x}-\mathbf{x}^k,\nabla_{\mathbf{x}} f(\mathbf{x}^k)\rangle+\frac{t}{2}\|\mathbf{x}-\mathbf{x}^k\|^2+g(\mathbf{x})\}
\end{eqnarray}
where $f$ and $g$ are the smooth and nonsmooth functions, respectively. Here $f$ is approximated at a given point $\mathbf{x}^k$.

\subsection{Contributions}
Here we list our contributions in this paper:
\begin{enumerate}
\item
We develop an iterative technique for tensor rank approximation given that the main objective function contains a nonsmooth $l_1$-regularization term. The proximal alternating minimization technique \cite{bolte,xu} has been adapted and rescaled for our tensor rank minimization problem.
\item
We provide some theoretical results on the convergence of our algorithm. We show the objective function satisfies a descent property in Lemma \ref{descent} and a subdifferential lower bound \cite{bolte}. A monotonically decreasing objective function is ensured on the sequence generated by the algorithm.
Furthermore, we point out that the sequence generated by algorithm converges to a critical point of the objective function with indicator functions on the normalization constraint that all the columns of the factor matrices have length one.
\item
For practical implementation, we provide a technique (as well as theoretical results) to find a suitable choice on the regularization parameter directly from the data. The regularization parameter choice has remained a very challenging problem \cite{novati,schwarz,KilmerOLeary,Morozov} in applied inverse problems. Our technique is based on the probabilistic consistency of the sparsity in the classical model found in \cite{wainwright,zhao}:
$$\mathbf{b}=\mathbf{B}\bm{\theta}^*+\bm{\varepsilon},$$
where $\bm{\theta}^*$ is a sparse signal, $\mathbf{B}$ is a design matrix and
$\bm{\varepsilon}$ is a vector of independent subgaussian entries with
mean zero and parameter $\sigma^2$.
We show that to find the true sparsity structure with a high probability,
the regularization parameter relies on two intrinsic parameters $\sigma^2$ and $\gamma$ of models,
where $\sigma^2$ represents the variance of noise,
and $\gamma$ is the incoherence parameter \cite{wainwright} on design matrix $\mathbf{B}$.
The relationship between the regularization and intrinsic parameters actually provides us a suggestion on
how to choose a reasonable regularization parameter for practical computation. To illustrate the performance of this low-rank approximation method,
our experiment consists of four parts.
In the first part, we show the relationship between the regularization parameter and the estimated rank.
In the second part, we estimate the number of rank-one components for given tensors
by adaptively selecting the regularization parameter $\lambda$.
In the third one, we compare our algorithm with a modified alternating least-squares algorithm.
In the last one, we handle the real surveillance video data.
\end{enumerate}

\subsection{Organization}
Our paper is organized as follows. In Section 2, we provide some notations and terminologies
used throughout this paper. In Section 3,
we formulate an $l_1$-regularization optimization to the low-rank approximation of tensors.
In Section 4, we propose an algorithm to solve this $l_1$-regularization optimization
by using a rescaling version of the proximal alternating minimization technique.
In Section 5 we discuss the probabilistic consistency of the sparse optimal solution
and give a suggestion on how to choose the regularization parameter.
The numerical experiments in Section 6 consist of simulated and real datasets.
Finally, our conclusion and future work are given in Section 7.

\section{Notation}
We denote a vector by a bold lower-case letter $\mathbf{a}$.
The bold upper-case letter $\mathbf{A}$ represents a matrix
and the symbol of tensor is a calligraphic letter $\mathcal{A}$.
Throughout this paper, we focus on third-order tensors
$\mathcal{A}=(a_{ijk})\in\mathbb{R}^{I\times J\times K}$ of three indices $1\leq i\leq I,1\leq j\leq J$ and $1\leq k\leq K$,
but all the methods proposed here can be also applied to tensors of arbitrary high order.

A third-order tensor $\mathcal{A}$ has column, row and tube fibers,
which are defined by fixing every index but one and
denoted by $\mathbf{a}_{:jk}$, $\mathbf{a}_{i:k}$ and $\mathbf{a}_{ij:}$ respectively.
Correspondingly, we can obtain three kinds $\mathbf{A}_{(1)},\mathbf{A}_{(2)}$ and $\mathbf{A}_{(3)}$ of matricization
of $\mathcal{A}$ according to respectively arranging the column, row, and tube fibers to be columns of matrices.
We can also consider the vectorization for $\mathcal{A}$ to
obtain a row vector $\mathbf{a}$ such the elements of $\mathcal{A}$ are arranged
according to $k$ varying faster than $j$ and $j$ varying faster than $i$, i.e.,
$\mathbf{a}=(a_{111},\cdots,a_{11K},a_{121},\cdots,a_{12K},\cdots,a_{1J1},\cdots,a_{1JK},\cdots)$.

The outer product $\mathbf{x}\circ\mathbf{y}\circ\mathbf{z}\in\mathbb{R}^{I\times J\times K}$ of three nonzero vectors
$\mathbf{x}, \mathbf{y}$ and $\mathbf{z}$ is a rank-one tensor with elements $x_iy_jz_k$ for all the indices.
A canonical polyadic (CP) decomposition of $\mathcal{A}\in\mathbb{R}^{I\times J\times K}$ expresses $\mathcal{A}$ as a sum of rank-one outer products:
\begin{equation}\label{cpd}
\mathcal{A}=\sum_{r=1}^{R} \mathbf{x}_r\circ\mathbf{y}_r\circ\mathbf{z}_r
\end{equation}
where $\mathbf{x}_r\in\mathbb{R}^I,\mathbf{y}_r\in\mathbb{R}^J,\mathbf{z}_r\in\mathbb{R}^K$ for $1\leq r\leq R$.
Every outer product $\mathbf{x}_r\circ\mathbf{y}_r\circ\mathbf{z}_r$ is called as a rank-one component
and the integer $R$ is the number of rank-one components in tensor $\mathcal{A}$.
The minimal number $R$ such that the decomposition (\ref{cpd}) holds is the rank of tensor $\mathcal{A}$, which is denoted by $\mbox{rank}(\mathcal{A})$.
For any tensor $\mathcal{A}\in\mathbb{R}^{I\times J\times K}$,
$\mbox{rank}(\mathcal{A})$ has an upper bound $\min\{IJ,JK,IK\}$ \cite{kruskal}.

The CP decomposition (\ref{cpd}) can be also written as:
\begin{equation}\label{cpd2}
\mathcal{A}=\sum_{r=1}^{R} \alpha_r\mathbf{x}_r\circ\mathbf{y}_r\circ\mathbf{z}_r
\end{equation}
where $\alpha_r\in\mathbb{R}$ is a rescaling coefficient of rank-one tensor $\mathbf{x}_r\circ\mathbf{y}_r\circ\mathbf{z}_r$ for $r=1,\cdots,R$.
For convenience, we let $\bm{\alpha}=(\alpha_1,\cdots,\alpha_R)\in\mathbb{R}^R$
and $[\bm{\alpha};\mathbf{X},\mathbf{Y},\mathbf{Z}]_R = \sum_{r=1}^{R} \alpha_r\mathbf{x}_r\circ\mathbf{y}_r\circ\mathbf{z}_r$
in (\ref{cpd2}) where $\mathbf{X}=(\mathbf{x}_1,\cdots,\mathbf{x}_R)\in\mathbb{R}^{I\times R},\mathbf{Y}=(\mathbf{y}_1,\cdots,\mathbf{y}_R)\in\mathbb{R}^{J\times R}$ and $\mathbf{Z}=(\mathbf{z}_1,\cdots,\mathbf{z}_R)\in\mathbb{R}^{K\times R}$
are called the factor matrices of tensor $\mathcal{A}$.
We impose a normalization constraint on factor matrices such that each column is normalized to length one \cite{kolda,uschmajew} which is denoted by $\mathbf{N(X,Y,Z)}=1$.
For most alternating optimization algorithms for tensors, \emph{flattening} the tensor (matricization) is necessary to be able to break down the problem into several subproblems. Here we describe a standard approach for a matricizing of a tensor. The Khatri-Rao product of two matrices $\mathbf{X}\in\mathbb{R}^{I\times R}$ and $\mathbf{Y}\in\mathbb{R}^{J\times R}$
is defined as
$$\mathbf{X\odot Y}=(\mathbf{x}_1\otimes\mathbf{y}_1,\cdots,\mathbf{x}_R\otimes\mathbf{y}_R)\in\mathbb{R}^{IJ\times R},$$
where the symbol ``$\mathbf{\otimes}$" denotes the Kronecker product:
$$\mathbf{x\otimes y}=(x_1y_1,\cdots,x_1y_J,\cdots,x_Iy_1,\cdots,x_Iy_J)^T.$$
Using the Khatri-Rao product, the decomposition (\ref{cpd2}) can be written in three different matrix forms of tensor $\mathcal{A}$ \cite{bro}:
\begin{equation}\label{khatri1}
\mathbf{A}_{(1)}=\mathbf{XD}(\mathbf{Z\odot Y})^T, \mathbf{A}_{(2)}=\mathbf{YD}(\mathbf{Z\odot X})^T, \mathbf{A}_{(3)}=\mathbf{ZD}(\mathbf{Y\odot X})^T\\
\end{equation}
where the matrix $\mathbf{D}$ is diagonal with elements of $\bm{\alpha}$.

\section{Sparse optimization for low-rank approximation}
The main goal of this study is to find a tensor of low-rank of the original tensor efficiently and accurately.
We first formulate a tensor rank optimization problem:
\begin{eqnarray*}
\min_{\mathcal{B}} \mbox{rank}(\mathcal{B}) \quad\mbox{subject~to~}\  \Vert \mathcal{A} -\mathcal{B} \Vert^2_F < \epsilon.
\end{eqnarray*}
For any given error $\varepsilon$,
the minimal rank of $\mathcal{B}$ such that $\|\mathcal{A}-\mathcal{B}\|_F^2\leq\varepsilon$
is no larger than $\mbox{rank}(\mathcal{A})$.
The optimal solution $\hat{\mathcal{B}}$ is
a low-rank approximation of $\mathcal{A}$ with error $\varepsilon$.

We represent the tensor $\mathcal{B}$ as
$\sum\limits_{r=1}^{R} \alpha_r\mathbf{x}_r\circ\mathbf{y}_r\circ\mathbf{z}_r=[\bm{\alpha};\mathbf{X,Y,Z}]_R$
where $R$ is a upper bound of the rank of $\mathcal{A}$ and columns of $\mathbf{X,Y,Z}$ satisfy
the normalization constraint $\mathbf{N(X,Y,Z)}=1$. Rescaling the columns of the matrices $\mathbf{X,Y,Z}$ is a standard technique \cite{uschmajew,acardunlavykolda}. It is implemented in practice for canonical polyadic tensor decomposition to prevent the norm of the approximated matrices blowing up to infinity while another factor matrix tend to zero while keeping the residual small.

The tensor rank minimization is equivalent to the following constraint optimization problem with $l_0$-norm:
\begin{equation}\label{problem2}
\min\limits_{\bm{\alpha}}\|\bm{\alpha}\|_0 \quad\mbox{s.t.}\ \|\mathcal{A}-[\bm{\alpha};\mathbf{X,Y,Z}]_R\|_F^2\leq\varepsilon,\mathbf{N(X,Y,Z)}=1
\end{equation}
The problem (\ref{problem2}) is equivalent to that of finding the rank of tensors when $\varepsilon=0$,
whose decision version is NP-hard \cite{hillar}.

To make it more tractable, we turn to an optimization problem with $l_1$-norm:
\begin{equation}\label{problem3}
\min\limits_{\bm{\alpha}}\|\bm{\alpha}\|_1 \quad\mbox{s.t.}\ \|\mathcal{A}-[\bm{\alpha};\mathbf{X,Y,Z}]_R\|_F^2\leq\varepsilon,\mathbf{N(X,Y,Z)}=1
\end{equation}
Furthermore, we then solve:
\begin{equation}\label{problem4}
\min\limits_{\mathbf{X,Y,Z},\bm{\alpha}}\frac{1}{2}\|\mathcal{A}-[\bm{\alpha};\mathbf{X,Y,Z}]_R\|_F^2+\lambda\|\bm{\alpha}\|_1 \quad\mbox{s.t.}\ \mathbf{N(X,Y,Z)}=1,
\end{equation}
an $l_1$-regularization optimization problem in which it includes the factor matrices as primal variables.
These optimization formulations are common in compressed sensing \cite{candestao,donoho,candesrombergtao,candes1,candes2,foucart}.
By introducing the indicator function, we switch the constrained optimization problem (\ref{problem4}) into the following unconstrained form:
\begin{equation}\label{problem5}
\min\limits_{\mathbf{X,Y,Z},\bm{\alpha}} \frac{1}{2}\|\mathcal{A}-[\bm{\alpha};\mathbf{X,Y,Z}]_R\|_F^2+\lambda\|\bm{\alpha}\|_1
+\delta_{S_1}(\mathbf{X})+\delta_{S_2}(\mathbf{Y})+\delta_{S_3}(\mathbf{Z})
\end{equation}
where $S_1=\{\mathbf{X}|\|\mathbf{x}_r\|=1,r=1,\cdots,R\},S_2=\{\mathbf{Y}|\|\mathbf{y}_r\|=1,r=1,\cdots,R\}$ and $S_3=\{\mathbf{Z}|\|\mathbf{z}_r\|=1,r=1,\cdots,R\}$.

\begin{remark}
Here there is no simple manner
to compute the relationship between $\varepsilon$ and $\lambda$ without already
knowing the optimal solutions of formulations (\ref{problem3}) and (\ref{problem4}).
In the matrix versions of Basis Pursuit:
$$\min\limits_{\bm{\theta}} \|\bm{\theta}\|_1, \mbox{s.t.}\ \|\mathbf{b}-\mathbf{B}\bm{\theta}\|\leq \varepsilon$$ and
$$\min\limits_{\bm{\theta}}  \frac{1}{2}\|\mathbf{b}-\mathbf{B}\bm{\theta}\|^2+\lambda\|\bm{\theta}\|_1,$$
it is possible to create a mapping between the two parameters through a Pareto curve to
estimate the relationship from the support of few solutions \cite{berg}.
\end{remark}

Our algorithm is tailored for solving the problem (\ref{problem5}). Let the objective function in (\ref{problem5}) as
$$\Psi(\mathbf{X,Y,Z,}\bm{\alpha}):\mathbb{R}^{I\times R}\times\mathbb{R}^{J\times R}\times\mathbb{R}^{K\times R}\times\mathbb{R}^R\rightarrow\mathbb{R}^+,$$
where
$$
\Psi(\mathbf{X,Y,Z,}\bm{\alpha})=f(\mathbf{X,Y,Z,}\bm{\alpha}) + g(\bm{\alpha})+\delta_{S_1}(\mathbf{X})+\delta_{S_2}(\mathbf{Y})+\delta_{S_3}(\mathbf{Z})
$$
with the approximation term $f(\mathbf{X,Y,Z,}\bm{\alpha})=\frac{1}{2}\|\mathcal{A}-[\bm{\alpha};\mathbf{X,Y,Z}]_R\|_F^2$,
the regularized penalty term $g(\bm{\alpha})=\lambda\|\bm{\alpha}\|_1$
and three indicator functions $\delta_{S_1}(\mathbf{X})$, $\delta_{S_2}(\mathbf{Y})$, $\delta_{S_3}(\mathbf{Z})$.
The function $f(\bullet)$ is a real polynomial function on $(\mathbf{X,Y,Z,}\bm{\alpha})$ and the function $g(\bullet)$ is a
non-differential continuous function on $\bm{\alpha}$.
Since $S_1,S_2,S_3$ are closed sets, indicator functions $\delta_{S_1}(\mathbf{X})$, $\delta_{S_2}(\mathbf{Y})$ and $\delta_{S_3}(\mathbf{Z})$
are proper and lower semicontinuous.
Moreover, since $\delta_{S_1}(\mathbf{X})$, $\delta_{S_2}(\mathbf{Y})$ and $\delta_{S_3}(\mathbf{Z})$
are three semi-algebraic functions, thus the objective function is also a semi-algebraic function.
So it is a Kurdyka-{\L}ojasiewicz (KL) function \cite{bolte}.
For a point $\bm{\omega}=(\mathbf{X,Y,Z,}\bm{\alpha})\in\mathbb{R}^{I\times R}\times\mathbb{R}^{J\times R}\times\mathbb{R}^{K\times R}\times\mathbb{R}^R$,
if its (limiting) subdifferential \cite{bolte}, denoted by $\partial \Psi(\bm{\omega})$, contains $\mathbf{0}$, we call it a critical point of $\Psi(\bullet)$.
The set of critical points of $\Psi(\bullet)$ is denoted by $C_\Psi$.

Due to  ill-posedness \cite{silva,lim} of the best low-rank approximation of tensors, it is known that the problem
of finding a best rank-R approximation for tensors of order 3 or higher has no solution in general. However, after introducing the $l_1$ penalty term $\lambda\|\bm{\alpha}\|_1$ to the low-rank approximation term $f(\bullet)$,
it is always attainable for the minimization of the objective function in (\ref{problem5}).
Thus we have the following theorem to show the existence of the global optimal solution of problem (\ref{problem5}).

\begin{theorem}\label{theorem2}
The global optimal solution of problem (\ref{problem5}) exists.
\end{theorem}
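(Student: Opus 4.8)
The plan is to invoke the Weierstrass extreme value theorem in its lower-semicontinuous form: a proper, lower semicontinuous function whose sublevel sets are compact attains its global minimum. Properness and lower semicontinuity of $\Psi$ have essentially been settled in the discussion preceding the theorem, since $f$ is a continuous polynomial, $g(\bm{\alpha})=\lambda\|\bm{\alpha}\|_1$ is continuous, and the indicator functions $\delta_{S_1},\delta_{S_2},\delta_{S_3}$ are proper and lower semicontinuous because $S_1,S_2,S_3$ are closed; moreover every term of $\Psi$ is nonnegative, so $\Psi\geq 0>-\infty$ and $\Psi\not\equiv+\infty$. Hence the whole burden of the proof reduces to exhibiting one nonempty compact sublevel set, equivalently to establishing coercivity of $\Psi$ on the feasible region.

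First I would fix a finite upper bound on the infimum. Taking $\bm{\alpha}=\mathbf{0}$ with any $\mathbf{X}\in S_1,\mathbf{Y}\in S_2,\mathbf{Z}\in S_3$ gives
$$
\Psi(\mathbf{X,Y,Z},\mathbf{0})=\tfrac{1}{2}\|\mathcal{A}\|_F^2=:M<\infty,
$$
so the sublevel set $L=\{\bm{\omega}:\Psi(\bm{\omega})\leq M\}$ is nonempty, and it suffices to show $L$ is compact. Lower semicontinuity of $\Psi$ makes $L$ closed, so only boundedness remains, and I would split this according to the two groups of variables. On $L$ the indicator terms are finite, so $(\mathbf{X,Y,Z})\in S_1\times S_2\times S_3$; because every column is constrained to unit length, $S_1\times S_2\times S_3$ is a product of unit spheres, hence compact, with $\|\mathbf{X}\|_F^2=\|\mathbf{Y}\|_F^2=\|\mathbf{Z}\|_F^2=R$ fixed. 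For the coefficient vector I would use $f\geq 0$ and $\delta_{S_i}\geq 0$: on $L$ (and with $\lambda>0$),
$$
\lambda\|\bm{\alpha}\|_1\leq f(\mathbf{X,Y,Z},\bm{\alpha})+\lambda\|\bm{\alpha}\|_1\leq\Psi(\bm{\omega})\leq M,
$$
so $\|\bm{\alpha}\|_1\leq M/\lambda$ and $\bm{\alpha}$ lies in a compact $\ell_1$-ball. Thus $L\subseteq(S_1\times S_2\times S_3)\times\{\bm{\alpha}:\|\bm{\alpha}\|_1\leq M/\lambda\}$, a compact set; being a closed subset of a compact set, $L$ is compact, and Weierstrass' theorem delivers a global minimizer.

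The step I expect to carry the real conceptual weight, though it is technically short, is the coercivity in $\bm{\alpha}$. This is exactly where the $\ell_1$ regularization earns its place: without the penalty the best rank-$R$ approximation problem is genuinely ill-posed (the de Silva--Lim phenomenon \cite{silva} of rank-one terms whose magnitudes diverge while nearly cancelling), and no minimizer need exist. Because the factors are normalized and all the scale is carried by $\bm{\alpha}$, such diverging-cancelling sequences necessarily force $\|\bm{\alpha}\|_1\to\infty$, and the penalty $\lambda\|\bm{\alpha}\|_1$ then drives $\Psi\to\infty$ along precisely these sequences, restoring compactness of the sublevel sets. In the write-up I would emphasize that it is the $\ell_1$ term, and not merely the normalization constraint, that rules out these escaping sequences and thereby guarantees existence.
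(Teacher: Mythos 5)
Your proof is correct and takes essentially the same route as the paper: both arguments reduce existence to compactness of a sublevel set, with the unit-norm constraints bounding $\mathbf{X},\mathbf{Y},\mathbf{Z}$ and the $\ell_1$ penalty bounding $\bm{\alpha}$. Your version is somewhat more explicit (the concrete bound $\|\bm{\alpha}\|_1\leq M/\lambda$ with $M=\tfrac{1}{2}\|\mathcal{A}\|_F^2$, and the use of lower semicontinuity of the indicator terms rather than continuity restricted to the constraint set), but the underlying argument is identical.
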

\begin{proof}
For any tensor $\mathcal{A}\in\mathbb{R}^{I\times J\times K}$,
the objective function $\frac{1}{2}\|\mathcal{A}-[\bm{\alpha};\mathbf{X,Y,Z}]_R\|^2_F+\lambda\|\bm{\alpha}\|_1+\delta_{S_1}(\mathbf{X})+\delta_{S_2}(\mathbf{Y})+\delta_{S_3}(\mathbf{Z})$ is
denoted as $\Psi(\mathbf{X,Y,Z,}\bm{\alpha})$.
Notice that all the columns of $\mathbf{X,Y,Z}$ in problem (\ref{problem5}) are constrained to have length one.
We define the $d-$dimensional unit sphere as $\Delta^d=\{\mathbf{v}\in\mathbb{R}^d|\|\mathbf{v}\|_2=1\}$,
and a set $S=\{(\mathbf{X,Y,Z,}\bm{\alpha})\in(\Delta^{I})^R\times(\Delta^{J})^R\times(\Delta^{K})^R\times\mathbb{R}^R\}$.
Since this function $\Psi(\bullet)$ is continuous on $S$, we only need to show that there is a point $s\in S$ such that $\Psi(s)=\inf\{\Psi(x)|x\in S\}$, i.e., the minimization of low-rank approximation with $l_1$ penalty is attainable.

For a scalar $\xi>\inf\{\Psi(x)|x\in S\}$, we will show that the level set $L=\{x\in S|\Psi(x)\leq\xi\}$ is compact.
Since $\Psi(\bullet)$ is continuous on $S$, the set $L$ is closed and we only need to prove that $L$ is bounded.
Actually, it is guaranteed by the $l_1$ penalty term $\lambda\|\bm{\alpha}\|_1$ of $\Psi(\bullet)$.
Otherwise, unbounded points will take the penalty term go to infinity contrary to the boundedness of $\Psi(\bullet)$ on $L$.
From the compactness of the level set $L$,
the infimum $\inf\{\Psi(x)|x\in L\}$ is attainable because $\Psi(\bullet)$ is continuous on $L$.
Furthermore, since $\inf\{\Psi(x)|x\in S\}=\inf\{\Psi(x)|x\in L\}$,
there exists a point $s\in S$ such that $\Psi(s)=\inf\{\Psi(x)|x\in S\}$.
\end{proof}

\section{Low-rank approximation of tensor}
In this section, we first describe an algorithm (LRAT) of low-rank approximation of tensor for
computing the solution of problem (\ref{problem5}),
and then show some theoretical guarantees on the convergence of LRAT:
(1) The sequence $\{(\mathbf{X}^n,\mathbf{Y}^n,\mathbf{Z}^n,\bm{\alpha}^n)\}_{n\in\mathbb{N}}$ generated by LRAT
converges to a critical point of $\Psi(\bullet)$.
(2) The limit point of $\{(\mathbf{X}^n,\mathbf{Y}^n,\mathbf{Z}^n,\bm{\alpha}^n)\}_{n\in\mathbb{N}}$ is a KKT point of problem (\ref{problem4}).

\subsection{The algorithm}
\begin{algorithm}[htb]
\renewcommand{\algorithmicrequire}{\textbf{Input:}}
\renewcommand\algorithmicensure {\textbf{Output:} }
\caption{Low-Rank Approximation Of Tensors (LRAT)}
\label{alg:Framwork}
\begin{algorithmic}[1]
\REQUIRE A third order tensor $\mathcal{A}$, an upper bound $R$ of $\mbox{rank}(\mathcal{A})$, a penalty parameter $\lambda$ and a scale $s>1$;\\
\ENSURE An approximated tensor $\mathcal{\hat{B}}$ with an estimated rank $\hat{R}$;\\
\STATE Give an initial tensor $\mathcal{B}^0=[\bm{\alpha}^0;\mathbf{X}^0,\mathbf{Y}^0,\mathbf{Z}^0]_R$.
\STATE Update step: \\
b. Update matrices $\mathbf{X,Y,Z}$:\\
\hspace*{.5cm}Compute $\mathbf{U}^n$ by (\ref{update}) and let $d_n=\max\{\|\mathbf{U}^n{\mathbf{U}^n}^T\|_F,1\}$.\\
\hspace*{.5cm}Compute $\mathbf{D}^n$ and $\mathbf{X}^{n+1}$ by
\begin{align*}
&\mathbf{D}^n=\mathbf{X}^n-\frac{1}{sd_n}\nabla_\mathbf{X} f(\mathbf{X}^n,\mathbf{Y}^n,\mathbf{Z}^n,\bm{\alpha}^n),\\
&\mathbf{X}^{n+1}=\mathbf{D}^n\text{diag}(\|\mathbf{d}_1^n\|,\cdots,\|\mathbf{d}_R^n\|)^{-1}
\end{align*}
\hspace*{.5cm}where $\mathbf{d}_i^n$ is the $i$-th column of $\mathbf{D}^n$ for $i=1,\cdots,R$.\\
\hspace*{.5cm}Compute $\mathbf{V}^n$ by (\ref{update}) and let $e_n=\max\{\|\mathbf{V}^n{\mathbf{V}^n}^T\|_F,1\}$.\\
\hspace*{.5cm}Compute $\mathbf{E}^n$ and $\mathbf{Y}^{n+1}$ by
\begin{align*}
&\mathbf{E}^n=\mathbf{Y}^n-\frac{1}{se_n}\nabla_\mathbf{Y} f(\mathbf{X}^{n+1},\mathbf{Y}^n,\mathbf{Z}^n,\bm{\alpha}^n),\\
&\mathbf{Y}^{n+1}=\mathbf{E}^n\text{diag}(\|\mathbf{e}_1^n\|,\cdots,\|\mathbf{e}_R^n\|)^{-1}
\end{align*}
\hspace*{.5cm}where $\mathbf{e}_i^n$ is the $i$-th column of $\mathbf{E}^n$ for $i=1,\cdots,R$.\\
\hspace*{.5cm}Compute $\mathbf{W}^n$ by (\ref{update}) and let $f_n=\max\{\|\mathbf{W}^n{\mathbf{W}^n}^T\|_F,1\}$.\\
\hspace*{.5cm}Compute $\mathbf{F}^n$ and $\mathbf{Z}^{n+1}$ by
\begin{align*}
&\mathbf{F}^n=\mathbf{Z}^n-\frac{1}{sf_n}\nabla_\mathbf{Z} f(\mathbf{X}^{n+1},\mathbf{Y}^{n+1},\mathbf{Z}^n,\bm{\alpha}^n),\\
&\mathbf{Z}^{n+1}=\mathbf{F}^n\text{diag}(\|\mathbf{f}_1^n\|,\cdots,\|\mathbf{f}_R^n\|)^{-1}
\end{align*}
\hspace*{.5cm}where $\mathbf{f}_i^n$ is the $i$-th column of $\mathbf{F}^n$ for $i=1,\cdots,R$.\\
c. Update the row vector $\bm{\alpha}$:\\
\hspace*{.5cm}Compute $\mathbf{Q}^{n+1}$ by (\ref{update2}) and let $\eta_n=\max\{\|\mathbf{Q}^{n+1}{\mathbf{Q}^{n+1}}^T\|_F,1\}$.\\
\hspace*{.5cm}Compute $\bm{\beta}^{n+1}$ by (\ref{updatea}) and use the soft thresholding:
\hspace*{.5cm} $$\bm{\alpha}^{n+1}=\mathcal{S}_{\frac{\lambda}{s\eta_n}}(\bm{\beta}^{n+1}).$$
\STATE Denote the limitations by $\mathbf{\hat{X}},\mathbf{\hat{Y}},\mathbf{\hat{Z}},\bm{\hat{\alpha}}$,
compute $\mathcal{\hat{B}}=[\bm{\hat{\alpha}};\mathbf{\hat{X}},\mathbf{\hat{Y}},\mathbf{\hat{Z}}]_R$
and count the number $\hat{R}$ of nonzero entries in $\bm{\hat{\alpha}}$.\\
\RETURN The tensor $\mathcal{\hat{B}}$ with the estimated rank $\hat{R}$.
\end{algorithmic}
\end{algorithm}

As in (\ref{khatri1}), the matricizations of tensor $\mathcal{B}=[\bm{\alpha};\mathbf{X,Y,Z}]_R$ via Khatri-Rao products are
$$\mathbf{B}_{(1)}=\mathbf{XD}(\mathbf{Z\odot Y})^T,\mathbf{B}_{(2)}=\mathbf{YD}(\mathbf{Z\odot X})^T,
\mathbf{B}_{(3)}=\mathbf{ZD}(\mathbf{Y\odot X})^T$$ where $\mathbf{D}=diag(\alpha_1,\cdots,\alpha_R)$.
We introduce the following three matrices for updating in the Algorithm \ref{alg:Framwork}:
\begin{equation}\label{update}
\mathbf{U}=\mathbf{D}(\mathbf{Z\odot Y})^T,\mathbf{V}=\mathbf{D}(\mathbf{Z\odot X})^T,\mathbf{W}=\mathbf{D}(\mathbf{Y\odot X})^T.
\end{equation}
It follows that $\mathbf{B}_{(1)}=\mathbf{XU}$, $\mathbf{B}_{(2)}=\mathbf{YV}$ and $\mathbf{B}_{(3)}=\mathbf{ZW}$.
Thus the function $f(\mathbf{X,Y,Z,}\bm{\alpha})$ can be written in three equivalent forms: $\frac{1}{2}\|\mathbf{A}_{(1)}-\mathbf{XU}\|_F^2=
\frac{1}{2}\|\mathbf{A}_{(2)}-\mathbf{YV}\|_F^2=\frac{1}{2}\|\mathbf{A}_{(3)}-\mathbf{ZW}\|_F^2$.
Furthermore, we have the gradients of $f(\bullet)$ on $\mathbf{X,Y,Z}$:
\begin{equation}\label{updatemeth}
\begin{aligned}
\nabla_\mathbf{X} f(\mathbf{X,Y,Z,}\bm{\alpha})=(\mathbf{XU}-\mathbf{A}_{(1)})\mathbf{U}^T,\\
\nabla_\mathbf{Y} f(\mathbf{X,Y,Z,}\bm{\alpha})=(\mathbf{YV}-\mathbf{A}_{(2)})\mathbf{V}^T,\\
\nabla_\mathbf{Z} f(\mathbf{X,Y,Z,}\bm{\alpha})=(\mathbf{ZW}-\mathbf{A}_{(3)})\mathbf{W}^T.\\
\end{aligned}
\end{equation}

Using the vectorization of tensors \cite{GolubVanLoan},
we can vectorize every rank-one tensor of outer product $\mathbf{x}_r\circ\mathbf{y}_r\circ\mathbf{z}_r$ into a row vector $\mathbf{q}_r$ for $1\leq r\leq R$. We denote a matrix consisting of all $\mathbf{q}_r$ for $1\leq r\leq R$ by
\begin{equation}\label{update2}
\mathbf{Q}=(\mathbf{q}_1^T,\cdots,\mathbf{q}_R^T)^T.
\end{equation}
Thus the function $f(\mathbf{X,Y,Z,}\bm{\alpha})$ can be also written as $\frac{1}{2}\|\mathbf{a}-\bm{\alpha Q}\|_F^2$,
where $\mathbf{a}$ is a vectorization for tensor $\mathcal{A}$.
Furthermore, the gradient of $f(\bullet)$ on $\bm{\alpha}$ is
\begin{equation}
\nabla_{\bm\alpha} f(\mathbf{X,Y,Z,}\bm{\alpha})=(\bm{\alpha}\mathbf{Q}-\mathbf{a})\mathbf{Q}^T.
\end{equation}

Our algorithm starts from $(\mathbf{X}^k,\mathbf{Y}^k,\mathbf{Z}^k,\bm{\alpha}^k)$
and iteratively update variables $\mathbf{X,Y,Z}$ and then $\bm{\alpha}$ in each loop.
Inspired by the equation (\ref{updatetech}), the update of $\mathbf{X}$ is based on the following constraint optimization problem:
\begin{equation*}
\begin{aligned}
& \arg\min\limits_{\mathbf{X}} \{\langle\mathbf{X}-\mathbf{X}^n,\nabla_\mathbf{X} f(\mathbf{X}^n,\mathbf{Y}^n,\mathbf{Z}^n,\bm{\alpha}^n)\rangle+\frac{sd_n}{2}\|\mathbf{X}-\mathbf{X}^n\|_F^2\} &\\
& \quad\mbox{s.t.}\ \|\mathbf{x}_i\|=1, i=1,\cdots,R, &
\end{aligned}
\end{equation*}
where $\mathbf{X}=(\mathbf{x}_1,\cdots,\mathbf{x}_R)\in\mathbb{R}^{I\times R}$,
$d_n=\max\{\|\mathbf{U}^n{\mathbf{U}^n}^T\|_F,1\}$ and $\mathbf{U}^n$ is computed from $\bm{\alpha}^n,\mathbf{Y}^n,\mathbf{Z}^n$ by (\ref{update}).
This problem is equivalent to:
\begin{equation*}
\arg\min\limits_{\mathbf{X}} \{\|\mathbf{X}-\mathbf{D}^n\|_F^2\} \quad\mbox{s.t.}\ \|\mathbf{x}_i\|=1, i=1,\cdots,R.
\end{equation*}
where $\mathbf{D}^n=\mathbf{X}^n-\frac{1}{sd_n}\nabla_\mathbf{X} f(\mathbf{X}^n,\mathbf{Y}^n,\mathbf{Z}^n,\bm{\alpha}^n)$.
So we obtain the update of $X$:
\begin{equation*}\label{updatex}
\mathbf{x}_i^{n+1}=\mathbf{d}_i^n/\|\mathbf{d}_i^n\|, i=1,\cdots,R,
\end{equation*}
where $\mathbf{x}_i^{n+1}$ and $\mathbf{d}_i^n$ are the $i$-th columns of $\mathbf{X}^{n+1}$ and $\mathbf{D}^n$.

Similarly, the update of $\mathbf{Y}$ is based on the following optimization problem:
\begin{equation*}
\begin{aligned}
& \arg\min\limits_{\mathbf{Y}} \{\langle\mathbf{Y}-\mathbf{Y}^n,\nabla_\mathbf{Y} f(\mathbf{X}^{n+1},\mathbf{Y}^n,\mathbf{Z}^n,\bm{\alpha}^n)\rangle+\frac{se_n}{2}\|\mathbf{Y}-\mathbf{Y}^n\|_F^2\}&\\
& \quad\mbox{s.t.}\ \|\mathbf{y}_i\|=1, i=1,\cdots,R, &
\end{aligned}
\end{equation*}
where $\mathbf{Y}=(\mathbf{y}_1,\cdots,\mathbf{y}_R)\in\mathbb{R}^{J\times R}$,
$e_n=\max\{\|\mathbf{V}^n{\mathbf{V}^n}^T\|_F,1\}$ and $\mathbf{V}^n$ is computed from $\bm{\alpha}^n,\mathbf{X}^{n+1},\mathbf{Z}^n$ by (\ref{update}).
So we obtain the update of $Y$:
\begin{equation*}\label{updatey}
\mathbf{y}_i^{n+1}=\mathbf{e}_i^n/\|\mathbf{e}_i^n\|, i=1,\cdots,R,
\end{equation*}
where $\mathbf{y}_i^{n+1}$ and $\mathbf{e}_i^n$ are the $i$-th columns of $\mathbf{Y}^{n+1}$ and
$\mathbf{E}^n=\mathbf{Y}^n-\frac{1}{se_n}\nabla_\mathbf{Y} f(\mathbf{X}^{n+1},\mathbf{Y}^n,\mathbf{Z}^n,\bm{\alpha}^n)$.

The update of $\mathbf{Z}$ is based on the following constraint optimization problem:
\begin{equation*}
\begin{aligned}
& \arg\min\limits_{\mathbf{Z}} \{\langle\mathbf{Z}-\mathbf{Z}^n,\nabla_\mathbf{Z} f(\mathbf{X}^{n+1},\mathbf{Y}^{n+1},\mathbf{Z}^n,\bm{\alpha}^n)\rangle+\frac{sf_n}{2}\|\mathbf{Z}-\mathbf{Z}^n\|_F^2\}&\\
& \quad\mbox{s.t.}\ \|\mathbf{z}_i\|=1, i=1,\cdots,R, &
\end{aligned}
\end{equation*}
where $\mathbf{Z}=(\mathbf{z}_1,\cdots,\mathbf{z}_R)\in\mathbb{R}^{K\times R}$,
$f_n=\max\{\|\mathbf{W}^n{\mathbf{W}^n}^T\|_F,1\}$ and $\mathbf{W}^n$ is computed from $\bm{\alpha}^n,\mathbf{X}^{n+1},\mathbf{Y}^{n+1}$ by (\ref{update}).
The update of $Z$ is:
\begin{equation*}\label{updatez}
\mathbf{z}_i^{n+1}=\mathbf{f}_i^{n+1}/\|\mathbf{f}_i^{n+1}\|, i=1,\cdots,R,
\end{equation*}
where $\mathbf{z}_i^{n+1}$ and $\mathbf{f}_i^{n+1}$ are the $i$-th columns of $\mathbf{Z}^{n+1}$ and
$\mathbf{F}^n=\mathbf{Z}^n-\frac{1}{sf_n}\nabla_\mathbf{Z} f(\mathbf{X}^{n+1},\mathbf{Y}^{n+1},\mathbf{Z}^n,\bm{\alpha}^n)$.

Finally, we consider to update $\bm{\alpha}$ by using the equation (\ref{updatetech}):
\begin{equation*}\label{updatemeth2}
\arg\min\limits_{\bm{\alpha}} \{\langle\bm{\alpha}-\bm{\alpha}^n,\nabla_{\bm{\alpha}} f(\mathcal{C}^{n+1},\mathbf{X}^{n+1},\mathbf{Y}^{n+1},\mathbf{Z}^{n+1},\bm{\alpha}^n)\rangle+\frac{s\eta_n}{2}\|\bm{\alpha}-\bm{\alpha}^n\|^2+\lambda\|\bm{\alpha}\|_1\}.
\end{equation*}
where $\eta_n=\max\{\|\mathbf{Q}^{n+1}{\mathbf{Q}^{n+1}}^T\|_F,1\}$ and $\mathbf{Q}^{n+1}$ can be computed from $\mathbf{X}^{n+1},\mathbf{Y}^{n+1},\mathbf{Z}^{n+1}$ by (\ref{update2}).
This optimization problem is equivalent to:
\begin{equation*}
\arg\min\limits_{\bm{\alpha}} \frac{1}{2}\|\bm{\alpha}-\bm{\alpha}^n+\frac{1}{s\eta_n}\nabla_{\bm{\alpha}} f(\mathcal{C}^{n+1},\mathbf{X}^{n+1},\mathbf{Y}^{n+1},\mathbf{Z}^{n+1},\bm{\alpha}^n)\|^2
+\frac{\lambda}{s\eta_n}\|\bm{\alpha}\|_1.
\end{equation*}

So we can obtain the update form for $\bm{\alpha}$ in Algorithm \ref{alg:Framwork}
by using the separate soft thresholding:
\begin{equation*}
\bm{\alpha}^{n+1}=\mathcal{S}_{\frac{\lambda}{s\eta_n}}(\bm{\beta}^{n+1})
\end{equation*}
where
\begin{equation}\label{updatea}
\bm{\beta}^{n+1}=\bm{\alpha}^n-\frac{1}{s\eta_n}\nabla_{\bm{\alpha}} f(\mathcal{C}^{n+1},\mathbf{X}^{n+1},\mathbf{Y}^{n+1},\mathbf{Z}^{n+1},\bm{\alpha}^n).
\end{equation}

It should be noted that if we set $\lambda=0$, the LRAT algorithm turns into a
modified alternative least square method (modALS).
This modified ALS algorithm uses linearized iterative technique \cite{bolte,xu} to update variables in each step.
Although the regularization parameter $\lambda$ is fixed in Algorithm \ref{alg:Framwork},
we can adaptively choose it for practical computation, which will be shown in the Section 5.

\begin{remark}
In our algorithm, the computational complexity mainly comes from matrix multiplications. The Update Step (2b) for updating $\bm{\alpha}$ in LRAT Algorithm require more cpu time than the Update Step (2a) because of the large matrix dimension of $\mathbf{Q}$. The complexity of our algorithm is $O(NIJKR^2)$, where $N$ is the total number of iteration.
\end{remark}

\subsection{Convergence of algorithm}
In this subsection, we illustrate the convergence mechanism of the LRAT algorithm,
which is a rescaling version of the proximal alternating linear minimization algorithm \cite{bolte}.
The following Lemma \ref{gradcon} points out that
for the function $f(\bm{\omega})=\frac{1}{2}\|\mathcal{A}-[\bm{\alpha};\mathbf{X,Y,Z}]_R\|^2_F$ ,
the gradient $\nabla_{\bm{\omega}}f(\bm{\omega})$ of $f(\bm{\omega})$
is Lipschitz continuous on bounded subsets and
all the partial gradients of $f(\bm{\omega})$ are globally Lipschitz with modulus.

\begin{lemma}\label{gradcon}
Let $f(\bm{\omega})$ be the approximation term $\frac{1}{2}\|\mathcal{A}-[\bm{\alpha};\mathbf{X,Y,Z}]_R\|^2_F$
where $\bm{\omega}=(\mathbf{X},\mathbf{Y},\mathbf{Z},\bm{\alpha})$.
We have that the gradient function $\nabla f$ is Lipschitz continuous on bounded subsets of
$\mathbb{R}^{I\times R}\times\mathbb{R}^{J\times R}\times\mathbb{R}^{K\times R}\times\mathbb{R}^R$, i.e.,
for any bounded subset $B\in\mathbb{R}^{I\times R}\times\mathbb{R}^{J\times R}\times\mathbb{R}^{K\times R}\times\mathbb{R}^R$,
there exists $M>0$ such that for any $\bm{\omega}_1,\bm{\omega}_2\in B$,
$$\|\nabla_{\bm{\omega}}f(\bm{\omega}_1)-\nabla_{\bm{\omega}} f(\bm{\omega}_2)\|_F\leq M\|\bm{\omega}_1-\bm{\omega}_2\|_F.$$
Moreover, for any fixed $\mathbf{X}\in\mathbb{R}^{I\times R},\mathbf{Y}\in\mathbb{R}^{J\times R},\mathbf{Z}\in\mathbb{R}^{K\times R},\bm{\alpha}\in\mathbb{R}^{R}$, there exist four constants $c,d,e,\eta>0$ such that:
\begin{align*}
\|\nabla_\mathbf{X} f(\mathbf{X}_1,\mathbf{Y},\mathbf{Z},\bm{\alpha})-\nabla_\mathbf{X} f(\mathbf{X}_2,\mathbf{Y},\mathbf{Z},\bm{\alpha})\|_F\leq d\|\mathbf{X}_1-\mathbf{X}_2\|_F, \mbox{for any}\; \mathbf{X}_1,\mathbf{X}_2\in\mathbb{R}^{I\times R}\\
\|\nabla_\mathbf{Y} f(\mathbf{X},\mathbf{Y}_1,\mathbf{Z},\bm{\alpha})-\nabla_\mathbf{Y} f(\mathbf{X},\mathbf{Y}_2,\mathbf{Z},\bm{\alpha})\|_F\leq e\|\mathbf{Y}_1-\mathbf{Y}_2\|_F,\mbox{for any}\; \mathbf{Y}_1,\mathbf{Y}_2\in\mathbb{R}^{J\times R}\\
\|\nabla_\mathbf{Z} f(\mathbf{X},\mathbf{Y},\mathbf{Z}_1,\bm{\alpha})-\nabla_\mathbf{Z} f(\mathbf{X},\mathbf{Y},\mathbf{Z}_2,\bm{\alpha})\|_F\leq f\|\mathbf{Z}_1-\mathbf{Z}_2\|_F,\mbox{for any}\; \mathbf{Z}_1,\mathbf{Z}_2\in\mathbb{R}^{K\times R}\\
\|\nabla_{\bm{\alpha}} f(\mathbf{X},\mathbf{Y},\mathbf{Z},\bm{\alpha}_1)-\nabla_{\bm{\alpha}} f(\mathbf{X},\mathbf{Y},\mathbf{Z},\bm{\alpha}_2)\|_F\leq \eta\|\bm{\alpha}_1-\bm{\alpha}_2\|_F,\mbox{for any}\; \bm{\alpha}_1,\bm{\alpha}_2\in\mathbb{R}^{R}
\end{align*}
where $d=\|\mathbf{UU^T}\|_F,e=\|\mathbf{VV^T}\|_F,f=\|\mathbf{WW^T}\|_F,\eta=\|\mathbf{QQ^T}\|_F$.\\
\end{lemma}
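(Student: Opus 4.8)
The plan is to separate the two assertions. The four global partial-Lipschitz bounds follow at once from the explicit gradient formulas already derived, whereas the local Lipschitz continuity of the full gradient rests on the polynomial structure of $f$. I would begin with the partial estimates. Fixing $\mathbf{Y},\mathbf{Z},\bm{\alpha}$ freezes the matrix $\mathbf{U}=\mathbf{D}(\mathbf{Z}\odot\mathbf{Y})^T$ of (\ref{update}), so the formula $\nabla_\mathbf{X} f=(\mathbf{X}\mathbf{U}-\mathbf{A}_{(1)})\mathbf{U}^T$ shows that $\mathbf{X}\mapsto\nabla_\mathbf{X} f$ is affine, with
$$\nabla_\mathbf{X} f(\mathbf{X}_1,\mathbf{Y},\mathbf{Z},\bm{\alpha})-\nabla_\mathbf{X} f(\mathbf{X}_2,\mathbf{Y},\mathbf{Z},\bm{\alpha})=(\mathbf{X}_1-\mathbf{X}_2)\mathbf{U}\mathbf{U}^T.$$
Submultiplicativity of the Frobenius norm, $\|\mathbf{A}\mathbf{B}\|_F\leq\|\mathbf{A}\|_F\|\mathbf{B}\|_F$, then gives the stated bound with $d=\|\mathbf{U}\mathbf{U}^T\|_F$. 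The estimates for $\mathbf{Y}$ and $\mathbf{Z}$ are identical with $\mathbf{U}$ replaced by $\mathbf{V}$ and $\mathbf{W}$, and the one for $\bm{\alpha}$ uses the vectorized formula $\nabla_{\bm{\alpha}}f=(\bm{\alpha}\mathbf{Q}-\mathbf{a})\mathbf{Q}^T$, whose increment is $(\bm{\alpha}_1-\bm{\alpha}_2)\mathbf{Q}\mathbf{Q}^T$, yielding $\eta=\|\mathbf{Q}\mathbf{Q}^T\|_F$.

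For the full gradient I would exploit that $f$ is a polynomial of degree eight in the entries of $(\mathbf{X},\mathbf{Y},\mathbf{Z},\bm{\alpha})$: each entry of $[\bm{\alpha};\mathbf{X},\mathbf{Y},\mathbf{Z}]_R$ is the multilinear expression $\sum_r\alpha_r x_{ir}y_{jr}z_{kr}$, so $f\in C^\infty$ and its Hessian $\nabla^2 f$ has polynomial, hence continuous, entries. Given a bounded subset $B$, enclose it in a closed ball $\bar{B}'$; since $\nabla^2 f$ is continuous and $\bar{B}'$ is compact, $M:=\sup_{\bm{\omega}\in\bar{B}'}\|\nabla^2 f(\bm{\omega})\|<\infty$. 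For $\bm{\omega}_1,\bm{\omega}_2\in B$ the segment between them lies in the convex set $\bar{B}'$, so the mean-value identity
$$\nabla f(\bm{\omega}_1)-\nabla f(\bm{\omega}_2)=\int_0^1\nabla^2 f\bigl(\bm{\omega}_2+t(\bm{\omega}_1-\bm{\omega}_2)\bigr)(\bm{\omega}_1-\bm{\omega}_2)\,dt$$
yields $\|\nabla f(\bm{\omega}_1)-\nabla f(\bm{\omega}_2)\|_F\leq M\|\bm{\omega}_1-\bm{\omega}_2\|_F$, which is the claimed local Lipschitz property.

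The main obstacle is conceptual rather than computational. In the full difference all four blocks vary simultaneously, so the auxiliary matrices $\mathbf{U},\mathbf{V},\mathbf{W},\mathbf{Q}$ are no longer frozen — each depends polynomially on the remaining blocks — and one cannot simply reuse the affine argument. Restricting to a bounded set is exactly what tames these cross terms: on $\bar{B}'$ the factor matrices and the coefficient vector stay bounded, so every polynomial entry of the Hessian (equivalently, of products such as $\mathbf{U}\mathbf{U}^T$) is uniformly bounded, which is why only local, and not global, Lipschitz continuity of $\nabla f$ can be expected. If one wishes to avoid the Hessian altogether, the same conclusion follows by estimating each partial-gradient difference directly and controlling increments like $\mathbf{U}_1-\mathbf{U}_2$ through the multilinearity of (\ref{update}); the bookkeeping of those cross terms is the only delicate point, and the boundedness of $B$ renders every resulting factor finite.
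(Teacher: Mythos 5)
Your proof is correct: the paper omits its own proof of this lemma, citing only ``standard techniques,'' and your argument is precisely the standard route — the affine dependence of each partial gradient on its own block (via the frozen matrices $\mathbf{U},\mathbf{V},\mathbf{W},\mathbf{Q}$) together with submultiplicativity of the Frobenius norm gives the four global block-Lipschitz constants exactly as stated, and the polynomial (hence $C^\infty$) structure of $f$ plus the mean-value integral bound on a compact convex enlargement of $B$ gives the local Lipschitz continuity of the full gradient. Nothing is missing.
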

The proof has not been included since it relies on standard techniques.
In our LRAT algorithm, those Lipschitz constants rely on the iterative number $n$ and have a lower bound $1$. Specifically,
$d_n=\max\{\|\mathbf{U}^{n+1}{\mathbf{U}^{n+1}}^T\|_F,1\}$, $e_n=\max\{\|\mathbf{V}^{n+1}{\mathbf{V}^{n+1}}^T\|_F,1\}$,
$f_n=\max\{\|\mathbf{W}^{n+1}{\mathbf{W}^{n+1}}^T\|_F,1\}$, $\eta_n=\max\{\|\mathbf{Q}^{n+1}{\mathbf{Q}^{n+1}}^T\|_F,1\}$.

\begin{lemma}\label{sufdec}(Sufficient decrease property \cite{bolte})
Let $f:\mathbb{R}^m\rightarrow\mathbb{R}$ be a continuously differentiable with gradient $\nabla f$ assumed $L_f$-Lipschitz continuous
and let $\sigma:\mathbb{R}^m\rightarrow(-\infty,+\infty]$ be a proper and lower semicontinuous function with $\inf_{\mathbb{R}^m}\sigma>-\infty$.
For any $t>L_f$ and $u\in\text{dom}\ \sigma$, define
$$u^+=\arg\min\limits_{x} \{\langle x-u,\nabla f(u)\rangle+\frac{t}{2}\|x-u\|^2+\sigma(u)\}.$$
Then we have that
\begin{equation}
f(u)+\sigma(u)-(f(u^+)+\sigma(u^+))\geq\frac{1}{2}(t-L_f)\|u^+-u\|^2.
\end{equation}
\end{lemma}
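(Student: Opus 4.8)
The plan is to combine two standard ingredients: the \emph{descent lemma} for a function with Lipschitz gradient, and the global optimality of the proximal-type point $u^+$. (Note that in the definition of $u^+$ the penalty must be read as $\sigma(x)$, the minimized variable, rather than the constant $\sigma(u)$.) First I would record the descent lemma for $f$: writing $f(u^+)-f(u)=\int_0^1\langle\nabla f(u+\tau(u^+-u)),\,u^+-u\rangle\,d\tau$ and subtracting $\langle\nabla f(u),u^+-u\rangle$, the $L_f$-Lipschitz bound on $\nabla f$ gives the quadratic upper estimate
$$f(u^+)\leq f(u)+\langle u^+-u,\nabla f(u)\rangle+\frac{L_f}{2}\|u^+-u\|^2.$$
This is the only place the constant $L_f$ enters.

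Next I would exploit that $u^+$ is a global minimizer of the map $x\mapsto\langle x-u,\nabla f(u)\rangle+\frac{t}{2}\|x-u\|^2+\sigma(x)$. Since $u\in\operatorname{dom}\sigma$, the point $x=u$ is admissible, and there the linear and quadratic terms vanish so the objective equals $\sigma(u)$. Comparing the minimal value attained at $u^+$ against this value yields
$$\langle u^+-u,\nabla f(u)\rangle+\frac{t}{2}\|u^+-u\|^2+\sigma(u^+)\leq\sigma(u).$$
The properness and lower semicontinuity of $\sigma$ together with $\inf\sigma>-\infty$ and the coercive quadratic term are what guarantee the $\arg\min$ is actually attained, so $u^+$ is well defined in the first place.

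Finally I would merge the two displays. Rearranging the descent estimate as $\langle u^+-u,\nabla f(u)\rangle\geq f(u^+)-f(u)-\frac{L_f}{2}\|u^+-u\|^2$ and substituting it into the optimality inequality gives, after collecting the quadratic terms,
$$f(u)+\sigma(u)-\bigl(f(u^+)+\sigma(u^+)\bigr)\geq\frac{1}{2}(t-L_f)\|u^+-u\|^2,$$
which is the claim; the hypothesis $t>L_f$ makes the right-hand side nonnegative, so this is a genuine decrease. There is no real obstacle in this argument: it is entirely elementary once the descent lemma is in hand, and the only care required is correct bookkeeping of signs when the two inequalities are combined.
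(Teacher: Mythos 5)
Your proof is correct and is essentially the standard argument from the cited reference \cite{bolte} (descent lemma for the $L_f$-Lipschitz gradient combined with comparing the value of the proximal objective at $u^+$ against its value at the feasible point $x=u$); the paper itself offers no proof of this lemma, only the citation. Your observation that the penalty in the definition of $u^+$ must be read as $\sigma(x)$ rather than $\sigma(u)$ is also right --- as written the minimizer would trivially be $x=u$ --- so this is a typo in the paper's statement that your reading correctly repairs.
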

\begin{lemma}\label{descent}
Let $\Psi(\bullet)$ be the objective function in problem (\ref{problem5}).
If $(\mathbf{X}^n,\mathbf{Y}^n,\mathbf{Z}^n,\bm{\alpha}^n)_{n\in\mathbb{N}}$ and $(d_n,e_n,f_n,\eta_n)_{n\in\mathbb{N}}$ are generated by our LRAT algorithm,
we have that for any $s>1$
\begin{align*}
\Psi(\mathbf{X}^n,\mathbf{Y}^n,\mathbf{Z}^n,\bm{\alpha}^n)-\Psi(\mathbf{X}^{n+1},\mathbf{Y}^n,\mathbf{Z}^n,\bm{\alpha}^n)\geq\frac{1}{2}(s-1)d_n\|\mathbf{X}^n-\mathbf{X}^{n+1}\|_F^2, \\
\Psi(\mathbf{X}^{n+1},\mathbf{Y}^n,\mathbf{Z}^n,\bm{\alpha}^n)-\Psi(\mathbf{X}^{n+1},\mathbf{Y}^{n+1},\mathbf{Z}^n,\bm{\alpha}^n)\geq\frac{1}{2}(s-1)e_n\|\mathbf{Y}^n-\mathbf{Y}^{n+1}\|_F^2,  \\
\Psi(\mathbf{X}^{n+1},\mathbf{Y}^{n+1},\mathbf{Z}^n,\bm{\alpha}^n)-\Psi(\mathbf{X}^{n+1},\mathbf{Y}^{n+1},\mathbf{Z}^{n+1},\bm{\alpha}^n)\geq\frac{1}{2}(s-1)f_n\|\mathbf{Z}^n-\mathbf{Z}^{n+1}\|_F^2,\\
\Psi(\mathbf{X}^{n+1},\mathbf{Y}^{n+1},\mathbf{Z}^{n+1},\bm{\alpha}^n)-\Psi(\mathbf{X}^{n+1},\mathbf{Y}^{n+1},\mathbf{Z}^{n+1},\bm{\alpha}^{n+1})\geq\frac{1}{2}(s-1)\eta_n\|\bm{\alpha}^n-\bm{\alpha}^{n+1}\|_F^2.
\end{align*}
\end{lemma}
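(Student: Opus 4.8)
The plan is to derive each of the four inequalities by recognizing that every block update in the LRAT algorithm is precisely the proximal step appearing in Lemma \ref{sufdec}, and then to invoke that lemma with the partial Lipschitz moduli supplied by Lemma \ref{gradcon}. I would treat the four updates in the Gauss--Seidel order in which the algorithm performs them, so that the variables already updated in the current sweep play the role of frozen data in the subsequent subproblems.

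For the $\mathbf{X}$-update I would fix $(\mathbf{Y}^n,\mathbf{Z}^n,\bm{\alpha}^n)$ and set the smooth function $h(\mathbf{X})=f(\mathbf{X},\mathbf{Y}^n,\mathbf{Z}^n,\bm{\alpha}^n)$ together with the nonsmooth function $\sigma=\delta_{S_1}$. By Lemma \ref{gradcon} the gradient $\nabla h$ is Lipschitz with modulus $\|\mathbf{U}^n(\mathbf{U}^n)^T\|_F\le d_n$, and the update $\mathbf{X}^{n+1}$ is exactly the minimizer $u^+$ of $\langle\mathbf{X}-\mathbf{X}^n,\nabla h(\mathbf{X}^n)\rangle+\frac{sd_n}{2}\|\mathbf{X}-\mathbf{X}^n\|_F^2+\delta_{S_1}(\mathbf{X})$, as established in the derivation preceding the lemma. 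Taking $t=sd_n$, the condition $t>L_h$ holds because $s>1$ forces $sd_n>d_n\ge\|\mathbf{U}^n(\mathbf{U}^n)^T\|_F$. Lemma \ref{sufdec} then yields $h(\mathbf{X}^n)+\delta_{S_1}(\mathbf{X}^n)-h(\mathbf{X}^{n+1})-\delta_{S_1}(\mathbf{X}^{n+1})\ge\frac{1}{2}(sd_n-L_h)\|\mathbf{X}^{n+1}-\mathbf{X}^n\|_F^2\ge\frac{1}{2}(s-1)d_n\|\mathbf{X}^{n+1}-\mathbf{X}^n\|_F^2$. The final step is to observe that the remaining terms of $\Psi$, namely $g(\bm{\alpha}^n)$, $\delta_{S_2}(\mathbf{Y}^n)$ and $\delta_{S_3}(\mathbf{Z}^n)$, are unchanged by the $\mathbf{X}$-update and hence cancel, while $\delta_{S_1}(\mathbf{X}^n)=\delta_{S_1}(\mathbf{X}^{n+1})=0$ since both iterates have unit-norm columns by construction; this converts the displayed inequality into the first claimed bound.

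The inequalities for $\mathbf{Y}$ and $\mathbf{Z}$ follow verbatim, replacing $h$ by the partial function with the freshly updated blocks frozen, that is $f(\mathbf{X}^{n+1},\,\cdot\,,\mathbf{Z}^n,\bm{\alpha}^n)$ and $f(\mathbf{X}^{n+1},\mathbf{Y}^{n+1},\,\cdot\,,\bm{\alpha}^n)$ respectively, and using the moduli $e_n\ge\|\mathbf{V}^n(\mathbf{V}^n)^T\|_F$ and $f_n\ge\|\mathbf{W}^n(\mathbf{W}^n)^T\|_F$ from Lemma \ref{gradcon}. The $\bm{\alpha}$-update uses the same mechanism, the only change being that the nonsmooth term $\sigma$ is now $g(\bm{\alpha})=\lambda\|\bm{\alpha}\|_1$ rather than an indicator; since $g$ is proper, lower semicontinuous and bounded below, Lemma \ref{sufdec} applies unchanged with $t=s\eta_n$ and $\eta_n\ge\|\mathbf{Q}^{n+1}(\mathbf{Q}^{n+1})^T\|_F$, giving the fourth bound.

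I expect the only delicate points to be bookkeeping rather than analysis: verifying that each subproblem genuinely matches the template of Lemma \ref{sufdec}, in particular that the constrained minimization with the normalization constraint coincides with the unconstrained proximal step for the indicator function, and confirming the cancellation of the frozen terms of $\Psi$ so that the per-block descent bounds are stated in terms of $\Psi$ itself. The one place warranting care is ensuring that $t>L_f$ holds with the correct margin; this is exactly where the rescaling by $s>1$ and the safeguard $d_n\ge 1$ enter, guaranteeing $t-L_f\ge(s-1)d_n$ and thereby the coefficient in each right-hand side.
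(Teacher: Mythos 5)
Your proposal is correct and follows exactly the route the paper takes: its proof of Lemma \ref{descent} is the one-line remark that the four inequalities follow from Lemma \ref{sufdec}, and your write-up simply supplies the details that remark implies (identifying each block update as the proximal step of Lemma \ref{sufdec}, taking $t=sd_n$ etc.\ against the partial Lipschitz moduli of Lemma \ref{gradcon}, and noting the cancellation of the frozen terms and the vanishing of the indicator functions). Nothing is missing; the bound $t-L\ge (s-1)d_n$ is exactly where the rescaling $s>1$ and the safeguard $d_n\ge 1$ are used, as you observe.
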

\begin{proof}
These four inequalities can be obtained by using Lemma \ref{sufdec}.
\end{proof}

The following lemma shows that the value of $\Psi(\bullet)$ monotonically decreases on the
sequence $(\bm{\omega}^{n})_{n\in\mathbb{N}}$, which is generated by our algorithm.

\begin{lemma}\label{keylem}
Let $\Psi(\bm{\omega})$ be the objective function $$\frac{1}{2}\|\mathcal{A}-[\bm{\alpha};\mathbf{X,Y,Z}]_R\|^2_F+\lambda\|\bm{\alpha}\|_1+\delta_{S_1}(\mathbf{X})+\delta_{S_2}(\mathbf{Y})+\delta_{S_3}(\mathbf{Z})$$
where $\bm{\omega}=(\mathbf{X},\mathbf{Y},\mathbf{Z},\bm{\alpha})$, then\\
$(i)$ the sequence $\{\Psi(\bm{\omega}^n)\}_{n\in\mathbb{N}}$ is nonincreasing and for any $n\in\mathbb{N}$,
there is a scalar $\beta>0$ such that $\Psi(\bm{\omega}^{n})-\Psi(\bm{\omega}^{n+1})\geq
\beta\|\bm{\omega}^n-\bm{\omega}^{n+1}\|_F^2$.\\
$(ii)$$\lim\limits_{n\rightarrow\infty}\|\mathbf{X}^n-\mathbf{X}^{n+1}\|_F\rightarrow 0$,$\lim\limits_{n\rightarrow\infty}\|\mathbf{Y}^n-\mathbf{Y}^{n+1}\|_F\rightarrow 0$,$\lim\limits_{n\rightarrow\infty}\|\mathbf{Z}^n-\mathbf{Z}^{n+1}\|_F\rightarrow 0$ and $\lim\limits_{n\rightarrow\infty}\|\bm{\alpha}^n-\bm{\alpha}^{n+1}\|_F\rightarrow 0$.\\
$(iii)$ the sequence $\{\bm{\omega}^n\}_{n\in\mathbb{N}}$ is bounded.\\
\end{lemma}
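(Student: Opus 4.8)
The plan is to derive all three parts as consequences of the four per-block descent inequalities in Lemma \ref{descent}, together with the elementary structure of $\Psi$.

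For $(i)$ I would add the four inequalities of Lemma \ref{descent}. The intermediate arguments cancel telescopically on the left, so the sum collapses to $\Psi(\bm{\omega}^n)-\Psi(\bm{\omega}^{n+1})$, while the right-hand side becomes $\tfrac{1}{2}(s-1)\bigl(d_n\|\mathbf{X}^n-\mathbf{X}^{n+1}\|_F^2+e_n\|\mathbf{Y}^n-\mathbf{Y}^{n+1}\|_F^2+f_n\|\mathbf{Z}^n-\mathbf{Z}^{n+1}\|_F^2+\eta_n\|\bm{\alpha}^n-\bm{\alpha}^{n+1}\|_F^2\bigr)$. The key observation is that the rescaled constants satisfy $d_n,e_n,f_n,\eta_n\geq 1$ by construction (each is a $\max$ with $1$), so every coefficient is bounded below by $\tfrac{1}{2}(s-1)$ uniformly in $n$. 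Since the product-space norm splits as $\|\bm{\omega}^n-\bm{\omega}^{n+1}\|_F^2=\|\mathbf{X}^n-\mathbf{X}^{n+1}\|_F^2+\|\mathbf{Y}^n-\mathbf{Y}^{n+1}\|_F^2+\|\mathbf{Z}^n-\mathbf{Z}^{n+1}\|_F^2+\|\bm{\alpha}^n-\bm{\alpha}^{n+1}\|_F^2$, this gives $\Psi(\bm{\omega}^n)-\Psi(\bm{\omega}^{n+1})\geq\beta\|\bm{\omega}^n-\bm{\omega}^{n+1}\|_F^2$ with $\beta=\tfrac{1}{2}(s-1)>0$; monotonicity is then immediate because the right-hand side is nonnegative.

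For $(ii)$ I would first note that $\Psi$ is bounded below by $0$: the approximation term and the $l_1$ term are nonnegative, and the indicator functions take values in $\{0,+\infty\}$. A nonincreasing sequence bounded below converges, so $\Psi(\bm{\omega}^n)-\Psi(\bm{\omega}^{n+1})\to 0$; combined with $(i)$ this forces $\|\bm{\omega}^n-\bm{\omega}^{n+1}\|_F\to 0$, and since each block difference is dominated by the full difference, all four individual limits follow. Equivalently, summing $(i)$ telescopes to $\sum_n\|\bm{\omega}^n-\bm{\omega}^{n+1}\|_F^2\leq\beta^{-1}\bigl(\Psi(\bm{\omega}^0)-\inf\Psi\bigr)<\infty$, which drives the summand to zero. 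For $(iii)$ the factor matrices are bounded automatically: the algorithm normalizes every column of $\mathbf{X}^n,\mathbf{Y}^n,\mathbf{Z}^n$ to unit length, so $\|\mathbf{X}^n\|_F=\sqrt{R}$ and similarly for $\mathbf{Y}^n,\mathbf{Z}^n$. The only coordinate not bounded a priori is $\bm{\alpha}^n$, which I control via the monotonicity: since the squared-norm term and the indicators are nonnegative along the feasible iterates, $\lambda\|\bm{\alpha}^n\|_1\leq\Psi(\bm{\omega}^n)\leq\Psi(\bm{\omega}^0)<\infty$, whence $\|\bm{\alpha}^n\|_1\leq\Psi(\bm{\omega}^0)/\lambda$ for all $n$, and so the whole sequence $\bm{\omega}^n$ is bounded.

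The argument is largely bookkeeping once Lemma \ref{descent} is available; the only points requiring care are (a) extracting a single $n$-independent constant $\beta$, which hinges on the lower bound $1$ built into $d_n,e_n,f_n,\eta_n$ and is precisely why the algorithm rescales with $\max\{\cdot,1\}$, and (b) checking that the initial point $\bm{\omega}^0$ is feasible so that $\Psi(\bm{\omega}^0)$ is finite, which makes the $\bm{\alpha}^n$ bound in $(iii)$ meaningful. I expect (a) to be the main conceptual point: without the rescaling, the coefficients could degenerate toward $0$ and the uniform quadratic lower bound in $(i)$ would break down.
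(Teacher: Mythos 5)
Your proposal is correct and follows essentially the same route as the paper: part $(i)$ by summing the four descent inequalities of Lemma \ref{descent} and using $d_n,e_n,f_n,\eta_n\geq 1$ to get a uniform $\beta$, part $(ii)$ by telescoping the finite sum, and part $(iii)$ by combining the unit-column normalization with the bound $\lambda\|\bm{\alpha}^n\|_1\leq\Psi(\bm{\omega}^n)\leq\Psi(\bm{\omega}^0)$ (the paper phrases this last step as a contradiction argument, but it is the same observation).
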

\begin{proof}
In our algorithm, all the Lipschitz constants $d_n,e_n,f_n,\eta_n\geq 1$.
So by Lemma \ref{descent}, $\Psi(\bm{\omega}^{n})-\Psi(\bm{\omega}^{n+1})
\geq\beta\|\bm{\omega}^n-\bm{\omega}^{n+1}\|_F^2$ where $\beta=\min\{(s-1)/2,1/2\}$.
We can obtain the first conclusion $(i)$.

The second conclusion $(ii)$ holds from the first one because the sum
$\sum\limits_{n=0}^{\infty}(\Psi(\bm{\omega}^{n})-\Psi(\bm{\omega}^{n+1}))$ is finite.

If the sequence $\{\bm{\omega}^n\}_{n\in\mathbb{N}}$ is unbounded,
it means that $\{\bm{\alpha}^n\}_{n\in\mathbb{N}}$ is unbounded
since columns of $\mathbf{X}^n,\mathbf{Y}^n,\mathbf{Z}^n$ are constrained to have length one.
So the sequence $\{\Psi(\bm{\omega}^n)\}_{n\in\mathbb{N}}$ is unbounded since $\Psi(\mathcal{C},\mathbf{X,Y,Z,}\bm{\alpha})\geq\lambda\|\bm{\alpha}\|_1$.
From the conclusion $(i)$, $\Psi(\bm{\omega}^n)$ is nonincreasing.
Since $\Psi(\bullet)$ has a lower bound, the sequence $\{\Psi(\bm{\omega}_n)\}_{n\in\mathbb{N}}$ is not unbounded.
It is a contradiction. So the sequence $\{\bm{\omega}^n\}_{n\in\mathbb{N}}$ must be bounded.
\end{proof}

Furthermore, from Lemma \ref{gradcon} and the boundness shown in Lemma \ref{keylem},
we can obtain the following Lipschitz upper bounds for subdifferentials.
\begin{lemma}\label{gradbound}
Let $\bm{\omega}^n=(\mathbf{X}^n,\mathbf{Y}^n,\mathbf{Z}^n,\bm{\alpha}^n)$ be the sequence generated by our LRAT algorithm.
There exist four positive scales $L_1,L_2,L_3$ and $L_4$ such that the following inequalities hold for any $n\in\mathbb{N}$. \\
There is some $\bm{\eta}_1^n\in\partial_{\mathbf{X}}\Psi(\bm{\omega}^{n})$
such that $\|\bm{\eta}_1^n\|_F\leq L_1\|\bm{\omega}^{n}-\bm{\omega}^{n-1}\|_F$.\\
There is some $\bm{\eta}_2^n\in\partial_{\mathbf{Y}}\Psi(\bm{\omega}^{n})$
such that $\|\bm{\eta}_2^n\|_F\leq L_2\|\bm{\omega}^{n}-\bm{\omega}^{n-1}\|_F$.\\
There is some $\bm{\eta}_3^n\in\partial_{\mathbf{Z}}\Psi(\bm{\omega}^{n})$
such that $\|\bm{\eta}_3^n\|_F\leq L_3\|\bm{\omega}^{n}-\bm{\omega}^{n-1}\|_F$.\\
There is some $\bm{\eta}_4^n\in\partial_{\bm{\alpha}}\Psi(\bm{\omega}^n)$
such that $\|\bm{\eta}_4^n\|_F\leq L_4\|\bm{\omega}^{n}-\bm{\omega}^{n-1}\|_F$.
\end{lemma}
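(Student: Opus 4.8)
The plan is to exploit the first-order optimality condition satisfied by each block update, together with the Lipschitz continuity of $\nabla f$ on bounded sets (Lemma \ref{gradcon}) and the boundedness of the iterates (Lemma \ref{keylem}$(iii)$). Throughout, I fix a bounded set $B$ containing the whole sequence $\{\bm{\omega}^n\}_{n\in\mathbb{N}}$; on $B$ the gradient $\nabla f$ is $M$-Lipschitz, and the rescaling constants $d_n,e_n,f_n,\eta_n$ --- being continuous functions of the (bounded) variables through (\ref{update}) and (\ref{update2}) --- admit uniform upper bounds, say $d_n\leq\bar d$, $e_n\leq\bar e$, $f_n\leq\bar f$, $\eta_n\leq\bar\eta$.

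First I would record the optimality condition for the $\mathbf{X}$-update. Since $\mathbf{X}^{n}$ minimizes the linearized proximal surrogate subject to $\mathbf{X}\in S_1$, the Fermat rule for the limiting subdifferential gives
$$0\in\nabla_\mathbf{X} f(\mathbf{X}^{n-1},\mathbf{Y}^{n-1},\mathbf{Z}^{n-1},\bm{\alpha}^{n-1})+sd_{n-1}(\mathbf{X}^{n}-\mathbf{X}^{n-1})+\partial\delta_{S_1}(\mathbf{X}^{n}),$$
so there is $\bm{\xi}_1^n\in\partial\delta_{S_1}(\mathbf{X}^n)$ equal to $-\nabla_\mathbf{X} f(\bm{\omega}^{n-1})-sd_{n-1}(\mathbf{X}^n-\mathbf{X}^{n-1})$. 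I then set $\bm{\eta}_1^n:=\nabla_\mathbf{X} f(\bm{\omega}^n)+\bm{\xi}_1^n\in\partial_\mathbf{X}\Psi(\bm{\omega}^n)$, which rearranges to $\nabla_\mathbf{X} f(\bm{\omega}^n)-\nabla_\mathbf{X} f(\bm{\omega}^{n-1})-sd_{n-1}(\mathbf{X}^n-\mathbf{X}^{n-1})$. A triangle inequality, the $M$-Lipschitz bound, and $d_{n-1}\leq\bar d$ then yield $\|\bm{\eta}_1^n\|_F\leq(M+s\bar d)\|\bm{\omega}^n-\bm{\omega}^{n-1}\|_F$, so $L_1:=M+s\bar d$ works. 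The argument for $\bm{\alpha}$ is identical after replacing $\delta_{S_1}$ by $g$ and $\partial\delta_{S_1}$ by $\partial g$, giving $L_4:=M+s\bar\eta$.

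The cases of $\mathbf{Y}$ and $\mathbf{Z}$ need one extra bookkeeping step because the algorithm is Gauss--Seidel: the $\mathbf{Y}$-update linearizes $f$ at the mixed point $(\mathbf{X}^n,\mathbf{Y}^{n-1},\mathbf{Z}^{n-1},\bm{\alpha}^{n-1})$ rather than at $\bm{\omega}^{n-1}$. Constructing $\bm{\eta}_2^n$ in the same way leaves the difference $\nabla_\mathbf{Y} f(\bm{\omega}^n)-\nabla_\mathbf{Y} f(\mathbf{X}^n,\mathbf{Y}^{n-1},\mathbf{Z}^{n-1},\bm{\alpha}^{n-1})$; since both arguments share the block $\mathbf{X}^n$ and differ only in $(\mathbf{Y},\mathbf{Z},\bm{\alpha})$, the $M$-Lipschitz estimate still bounds this term by $M\|\bm{\omega}^n-\bm{\omega}^{n-1}\|_F$, while the proximal term is bounded by $s\bar e\|\mathbf{Y}^n-\mathbf{Y}^{n-1}\|_F\leq s\bar e\|\bm{\omega}^n-\bm{\omega}^{n-1}\|_F$. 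Thus $L_2:=M+s\bar e$ and, symmetrically, $L_3:=M+s\bar f$.

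The hard part will be the careful handling of these mixed linearization points coming from the Gauss--Seidel sweep: one must check that each mixed point lies in $B$ (which is immediate, since every one of its blocks is drawn from the bounded sequence) and that the resulting gradient differences are still controlled by the \emph{full} increment $\|\bm{\omega}^n-\bm{\omega}^{n-1}\|_F$ rather than by a single block increment. Once this is in place, the uniform upper bounds $\bar d,\bar e,\bar f,\bar\eta$ on the rescaling constants --- which rely on the boundedness asserted in Lemma \ref{keylem}$(iii)$ --- reduce the remaining estimates to routine applications of the triangle inequality and Lemma \ref{gradcon}.
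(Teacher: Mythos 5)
Your proposal is correct and follows essentially the same route as the paper's proof: extract the first-order optimality condition of each block update, define $\bm{\eta}_i^n$ as the current gradient plus the subgradient element implied by that condition, and bound the result via the Lipschitz estimates of Lemma \ref{gradcon} together with the boundedness of the iterates from Lemma \ref{keylem}. Your explicit treatment of the Gauss--Seidel mixed linearization points and of the uniform upper bounds on $d_n,e_n,f_n,\eta_n$ is, if anything, slightly more careful than the paper's own write-up.
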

\begin{proof}
By the update of $\mathbf{X}$,
\begin{equation*}
\begin{aligned}
\mathbf{X}^n=\arg\min\limits_{\mathbf{X}} \{ & \langle\mathbf{X}-\mathbf{X}^{n-1},\nabla_\mathbf{X} f(\mathcal{C}^{n},\mathbf{X}^{n-1},\mathbf{Y}^{n-1},\mathbf{Z}^{n-1},\bm{\alpha}^{n-1})\rangle\\
&+\frac{sd_n}{2}\|\mathbf{X}-\mathbf{X}^{n-1}\|_F^2+\delta_{S_1}(\mathbf{X})\}.
\end{aligned}
\end{equation*}
So we have that
$\nabla_{\mathbf{X}}f(\mathbf{X}^{n-1},\mathbf{Y}^{n-1},\mathbf{Z}^{n-1},\bm{\alpha}^{n-1})+sd_n(\mathbf{X}^n-\mathbf{X}^{n-1})+\mathbf{u}_1^n=\mathbf{0}$
where $\mathbf{u}_1^n\in\partial_{\mathbf{X}}\delta_{S_1}(\mathbf{X}^n)$.
Hence
$$\mathbf{u}_1^n=sd_n(\mathbf{X}^{n-1}-\mathbf{X}^{n})-\nabla_{\mathbf{X}}f(\mathbf{X}^{n-1},\mathbf{Y}^{n-1},\mathbf{Z}^{n-1},\bm{\alpha}^{n-1}).$$
Since $\nabla_{\mathbf{X}}f(\mathbf{X}^{n},\mathbf{Y}^{n},\mathbf{Z}^{n},\bm{\alpha}^{n})+\mathbf{u}_1^n\in\partial_{\mathbf{X}}\Psi(\mathbf{X}^n,\mathbf{Y}^n,\mathbf{Z}^n,\bm{\alpha}^n)$,
we have that
\begin{equation}
\begin{aligned}
\bm{\eta}_1^n=& \nabla_{\mathbf{X}}f(\mathbf{X}^{n},\mathbf{Y}^{n},\mathbf{Z}^{n},\bm{\alpha}^{n})+
sd_n(\mathbf{X}^{n-1}-\mathbf{X}^{n})\\
& -\nabla_{\mathbf{X}}f(\mathbf{X}^{n-1},\mathbf{Y}^{n-1},\mathbf{Z}^{n-1},\bm{\alpha}^{n-1})\\
= & \nabla_{\mathbf{X}}f(\mathbf{X}^{n},\mathbf{Y}^{n},\mathbf{Z}^{n},\bm{\alpha}^{n})+\mathbf{u}_1^n\\
\in & \partial_{\mathbf{X}}\Psi(\mathbf{X}^n,\mathbf{Y}^n,\mathbf{Z}^n,\bm{\alpha}^n)
\end{aligned}
\end{equation}
By Lemma \ref{gradcon} and the boundness of $\{\bm{\omega}^n\}_{n\in\mathbb{N}}$,
we have that there exists a constant $L_1$ such that $\|\bm{\eta}_1^n\|_F\leq L_1\|\bm{\omega}^{n}-\bm{\omega}^{n-1}\|_F$.

Similarly, we can choose
\begin{equation}
\begin{aligned}
\bm{\eta}_2^n=& \nabla_{\mathbf{Y}}f(\mathbf{X}^{n},\mathbf{Y}^{n},\mathbf{Z}^{n},\bm{\alpha}^{n})+
se_n(\mathbf{Y}^{n-1}-\mathbf{Y}^{n})\\
& -\nabla_{\mathbf{Y}}f(\mathbf{X}^{n},\mathbf{Y}^{n-1},\mathbf{Z}^{n-1},\bm{\alpha}^{n-1})\\
\end{aligned}
\end{equation}
and
\begin{equation}
\begin{aligned}
\bm{\eta}_3^n=& \nabla_{\mathbf{Z}}f(\mathbf{X}^{n},\mathbf{Y}^{n},\mathbf{Z}^{n},\bm{\alpha}^{n})+
sf_n(\mathbf{Z}^{n-1}-\mathbf{Z}^{n})\\
& -\nabla_{\mathbf{Z}}f(\mathbf{X}^{n},\mathbf{Y}^{n},\mathbf{Z}^{n-1},\bm{\alpha}^{n-1})\\
\end{aligned}
\end{equation}
So $\bm{\eta}_2^n\in\partial_{\mathbf{Y}}\Psi(\mathbf{X}^n,\mathbf{Y}^n,\mathbf{Z}^n,\bm{\alpha}^n)$ and
$\bm{\eta}_3^n\in\partial_{\mathbf{Z}}\Psi(\mathbf{X}^n,\mathbf{Y}^n,\mathbf{Z}^n,\bm{\alpha}^n)$.
Furthermore, there exist constants $L_2$ and $L_3$ such that
$\|\bm{\eta}_2^n\|_F\leq L_2\|\bm{\omega}^{n}-\bm{\omega}^{n-1}\|_F$
and $\|\bm{\eta}_3^n\|_F\leq L_3\|\bm{\omega}^{n}-\bm{\omega}^{n-1}\|_F$.

By the update of $\bm{\alpha}$,
\begin{equation}\label{gradona}
\nabla_{\bm{\alpha}}f(\mathbf{X}^n,\mathbf{Y}^n,\mathbf{Z}^n,\bm{\alpha}^{n-1})+
s\eta_n(\bm{\alpha}^n-\bm{\alpha}^{n-1})+\mathbf{u}^n=0
\end{equation}
where $\mathbf{u}^n\in\partial_{\bm{\alpha}}g(\bm{\alpha}^n)$
and $g(\mathbf{x})=\lambda\|\mathbf{x}\|_1$.
Denote $\bm{\eta}_4^n$ as $\nabla_{\bm{\alpha}}f(\mathbf{X}^n,\mathbf{Y}^n,\mathbf{Z}^n,\bm{\alpha}^n)+\mathbf{u}^n$.
Thus we have that $\bm{\eta}_4^n\in
\partial_{\bm{\alpha}}\Psi(\mathbf{X}^n,\mathbf{Y}^n,\mathbf{Z}^n,\bm{\alpha}^n)$ and
\begin{align*}
\|\bm{\eta}_4^n\|_F&=\|\nabla_{\bm{\alpha}}f(\mathbf{X}^n,\mathbf{Y}^n,\mathbf{Z}^n,\bm{\alpha}^n)+\mathbf{u}^n\|_F\\
&\leq\|\nabla_{\bm{\alpha}}f(\mathbf{X}^n,\mathbf{Y}^n,
\mathbf{Z}^n,\bm{\alpha}^n)-
\nabla_{\bm{\alpha}}f(\mathbf{X}^n,\mathbf{Y}^n,\mathbf{Z}^n,\bm{\alpha}^{n-1})\|_F
+s\eta_n\|\bm{\alpha}^n-\bm{\alpha}^{n-1}\|_F\\
&\leq L_4\|\bm{\omega}^{n}-\bm{\omega}^{n-1}\|_F
\end{align*}
We also get the last inequality by using Lemma \ref{gradcon} and the boundness of $\{\bm{\omega}^n\}_{n\in\mathbb{N}}$.
\end{proof}

The following theorem shows that the sequence of the LRAT algorithm is convergent to a critical point of $\Psi(\bullet)$.
\begin{theorem}\label{limitpoint}
Let $\{\bm{\omega}^n\}_{n\in\mathbb{N}}$ be a sequence generated by the LRAT algorithm from a starting point $\bm{\omega}^0$.
Then the sequence $\{\bm{\omega}^n\}_{n\in\mathbb{N}}$ converges to a critical point $\bm{\omega}^*$ of $\Psi(\bm{\omega})$.
\end{theorem}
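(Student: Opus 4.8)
The plan is to follow the abstract convergence framework for proximal alternating schemes on Kurdyka-{\L}ojasiewicz functions \cite{bolte}, assembling the three standard ingredients that Lemma \ref{keylem} and Lemma \ref{gradbound} have already supplied: a sufficient decrease estimate, a subgradient (relative error) bound, and the KL property of $\Psi$. The argument then proceeds in three stages: identify the cluster-point set, show every cluster point is critical, and upgrade subsequential convergence to convergence of the entire sequence.

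First I would record the two building blocks in the form needed. From Lemma \ref{keylem}(i) the sequence obeys $\Psi(\bm{\omega}^{n}) - \Psi(\bm{\omega}^{n+1}) \geq \beta \|\bm{\omega}^{n}-\bm{\omega}^{n+1}\|_F^2$ with $\beta>0$, and from Lemma \ref{gradbound}, setting $\bm{\eta}^n = (\bm{\eta}_1^n,\bm{\eta}_2^n,\bm{\eta}_3^n,\bm{\eta}_4^n) \in \partial\Psi(\bm{\omega}^n)$ and $L=L_1+L_2+L_3+L_4$, one has $\operatorname{dist}(\mathbf{0},\partial\Psi(\bm{\omega}^n)) \leq \|\bm{\eta}^n\|_F \leq L\|\bm{\omega}^{n}-\bm{\omega}^{n-1}\|_F$. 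Lemma \ref{keylem}(iii) guarantees boundedness, so by Bolzano-Weierstrass the cluster-point set $\Omega$ of $\{\bm{\omega}^n\}$ is nonempty and compact. I would then show every cluster point is critical: if $\bm{\omega}^{n_k} \to \bm{\omega}^*$, then $\|\bm{\omega}^{n_k}-\bm{\omega}^{n_k-1}\|_F \to 0$ by Lemma \ref{keylem}(ii), so $\bm{\eta}^{n_k} \to \mathbf{0}$ by the relative-error bound. Since $\Psi(\bm{\omega}^{n})$ is nonincreasing and bounded below it converges to some $\Psi^*$, and a short argument (continuity of $f$ and $g$ with lower semicontinuity of the indicator terms, together with the proximal update recovering the penalty value along $\bm{\alpha}$) shows $\Psi(\bm{\omega}^{n_k}) \to \Psi(\bm{\omega}^*) = \Psi^*$. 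Closedness of the graph of the limiting subdifferential $\partial\Psi$ then yields $\mathbf{0}\in\partial\Psi(\bm{\omega}^*)$, i.e. $\bm{\omega}^*\in C_\Psi$, and $\Psi\equiv\Psi^*$ on $\Omega$.

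The decisive step is upgrading subsequential convergence to convergence of the whole sequence via the KL inequality, and this is the hardest part. Since $\Psi$ is a KL function there is a concave desingularizing function $\varphi$ with $\varphi'(\Psi(\bm{\omega})-\Psi^*)\,\operatorname{dist}(\mathbf{0},\partial\Psi(\bm{\omega})) \geq 1$ on a neighborhood of any cluster point, after first uniformizing this inequality over the compact connected set $\Omega$ on which $\Psi$ is constant. For large $n$ the iterate $\bm{\omega}^n$ lies in this neighborhood with $\Psi(\bm{\omega}^n)>\Psi^*$, so combining the KL inequality with the subgradient bound gives $\varphi'(\Psi(\bm{\omega}^n)-\Psi^*) \geq 1/(L\|\bm{\omega}^n-\bm{\omega}^{n-1}\|_F)$. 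Writing $\Delta_n = \varphi(\Psi(\bm{\omega}^n)-\Psi^*)-\varphi(\Psi(\bm{\omega}^{n+1})-\Psi^*)$ and using concavity of $\varphi$ together with the descent inequality, I would derive
\[
\Delta_n \;\geq\; \varphi'(\Psi(\bm{\omega}^n)-\Psi^*)\bigl(\Psi(\bm{\omega}^n)-\Psi(\bm{\omega}^{n+1})\bigr) \;\geq\; \frac{\beta\,\|\bm{\omega}^n-\bm{\omega}^{n+1}\|_F^2}{L\,\|\bm{\omega}^n-\bm{\omega}^{n-1}\|_F}.
\]
Applying $2\sqrt{ab}\leq a+b$ to this estimate produces $2\|\bm{\omega}^{n}-\bm{\omega}^{n+1}\|_F \leq \|\bm{\omega}^{n-1}-\bm{\omega}^{n}\|_F + \tfrac{L}{\beta}\Delta_n$, and summing over $n$ telescopes both the difference terms and $\sum\Delta_n$ (bounded since $\varphi\geq 0$), yielding the finite-length conclusion $\sum_{n}\|\bm{\omega}^{n}-\bm{\omega}^{n+1}\|_F < \infty$. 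A finite-length sequence is Cauchy, hence convergent; its limit is a cluster point and therefore, by the previous stage, a critical point $\bm{\omega}^*$ of $\Psi$. The main obstacle is precisely this KL argument: correctly uniformizing the KL inequality over $\Omega$ and handling the edge cases (iterates with $\Psi(\bm{\omega}^n)=\Psi^*$, where one instead shows the sequence is eventually stationary) so that the telescoping estimate is valid for all large $n$.
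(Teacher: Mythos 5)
Your proposal is correct and follows essentially the same route as the paper: both verify the sufficient decrease property (Lemma \ref{keylem}/\ref{descent}), the relative-error subgradient bound (Lemma \ref{gradbound}), and the KL property of $\Psi$, and then invoke the abstract convergence framework of Bolte, Sabach and Teboulle. The only difference is that the paper simply cites Theorem 3.1 of \cite{bolte} for the final KL/finite-length step, whereas you unpack that theorem's proof (cluster-point analysis, uniformized KL inequality, telescoping) in full detail.
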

\begin{proof}
By Lemma \ref{descent}, the sufficient decrease property is satisfied that there is a constant $\beta>0$
such that for any $n\in\mathbb{N}$
$$\beta\|\bm{\omega}^n-\bm{\omega}^{n+1}\|_F^2\leq\Psi(\bm{\omega}^{n})-\Psi(\bm{\omega}^{n+1}).$$

By Lemma \ref{gradbound}, the iterates gap has a lower bound by the length of a vector in the subdifferential of $\Psi$.
There is a constant $L>0$ and $\{\bm{\eta}^n\}_{n\in\mathbb{N}}$ such that for any $n\in\mathbb{N}$,
$$\|\bm{\eta}^n\|_F\leq L\|\bm{\omega}^{n}-\bm{\omega}^{n-1}\|_F$$
where $\bm{\eta}^n\in\partial\Psi(\bm{\omega}^{n})$.

Furthermore, since $\Psi(\bullet)$ is a KL function, we complete the proof by using Theorem 3.1 in \cite{bolte}.
\end{proof}

A point $\bm{\omega}=(\mathbf{X},\mathbf{Y},\mathbf{Z},\bm{\alpha})$ is called as a KKT point of problem (\ref{problem4})
if there are three diagonal matrices $\mathbf{H}_1,\mathbf{H}_2,\mathbf{H}_3\in\mathbb{R}^{R\times R}$ and
a vector $\mathbf{u}\in\partial_{\bm{\alpha}}g(\bm{\alpha})$
such that
\begin{equation}
\begin{aligned}
\nabla_{\mathbf{X}}f(\bm{\omega})+\mathbf{X}\mathbf{H}_1=\mathbf{0}, \nabla_{\mathbf{Y}}f(\bm{\omega})+\mathbf{Y}\mathbf{H}_2=\mathbf{0}, \nabla_{\mathbf{Z}}f(\bm{\omega})+\mathbf{Z}\mathbf{H}_3=\mathbf{0}\\
\nabla_{\bm{\alpha}}f(\bm{\omega})+\mathbf{u}=\mathbf{0},\mathbf{N(X,Y,Z)}=1.\\
\end{aligned}
\end{equation}
In the following, we show that the limit point $\bm{\omega}^*=(\mathcal{C}^*,\mathbf{X}^*,\mathbf{Y}^*,\mathbf{Z}^*,\bm{\alpha}^*)$
of the sequence $\{\bm{\omega}^n\}_{n\in\mathbb{N}}$ is a KKT point of problem (\ref{problem4}).

\begin{corollary}
Let $\bm{\omega}^*=(\mathbf{X}^*,\mathbf{Y}^*,\mathbf{Z}^*,\bm{\alpha}^*)$ be the limit point
of the sequence $\{\bm{\omega}^n\}_{n\in\mathbb{N}}$ generated by the LRAT algorithm.
If $\mathbf{X}^*,\mathbf{Y}^*$ and $\mathbf{Z}^*$ has full column rank,
the limit point $\bm{\omega}^*$ is a KKT point of problem (\ref{problem4}).
\end{corollary}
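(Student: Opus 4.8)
The plan is to read off the KKT conditions directly from the convergence result already established. By Theorem \ref{limitpoint} the generated sequence converges to a point $\bm{\omega}^*=(\mathbf{X}^*,\mathbf{Y}^*,\mathbf{Z}^*,\bm{\alpha}^*)$ that is critical for $\Psi$, i.e.\ $\mathbf{0}\in\partial\Psi(\bm{\omega}^*)$. Since $f$ is continuously differentiable and the remaining part of $\Psi$, namely $g(\bm{\alpha})+\delta_{S_1}(\mathbf{X})+\delta_{S_2}(\mathbf{Y})+\delta_{S_3}(\mathbf{Z})$, is separable across the four blocks, I would invoke the sum rule for the limiting subdifferential (valid because one summand is smooth) together with the product structure of a separable function to obtain the block decomposition
\begin{align*}
-\nabla_{\mathbf{X}}f(\bm{\omega}^*)\in\partial\delta_{S_1}(\mathbf{X}^*),\quad
-\nabla_{\mathbf{Y}}f(\bm{\omega}^*)\in\partial\delta_{S_2}(\mathbf{Y}^*),\\
-\nabla_{\mathbf{Z}}f(\bm{\omega}^*)\in\partial\delta_{S_3}(\mathbf{Z}^*),\quad
-\nabla_{\bm{\alpha}}f(\bm{\omega}^*)\in\partial g(\bm{\alpha}^*).
\end{align*}

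Next I would identify each of these inclusions explicitly. The set $S_1=(\Delta^{I})^R$ is a product of unit spheres, hence a smooth manifold, so its limiting subdifferential $\partial\delta_{S_1}(\mathbf{X}^*)$ is the ordinary normal cone: the set of matrices whose $r$-th column is a scalar multiple of $\mathbf{x}_r^*$. Because the columns of $\mathbf{X}^*$ are unit-norm points on the spheres, this normal cone is exactly $\{\mathbf{X}^*\mathbf{H}_1:\mathbf{H}_1\in\mathbb{R}^{R\times R}\text{ diagonal}\}$, and similarly for $\mathbf{Y}^*$ and $\mathbf{Z}^*$. Thus there exist diagonal matrices with $\nabla_{\mathbf{X}}f(\bm{\omega}^*)+\mathbf{X}^*\mathbf{H}_1=\mathbf{0}$, $\nabla_{\mathbf{Y}}f(\bm{\omega}^*)+\mathbf{Y}^*\mathbf{H}_2=\mathbf{0}$, $\nabla_{\mathbf{Z}}f(\bm{\omega}^*)+\mathbf{Z}^*\mathbf{H}_3=\mathbf{0}$, while the last inclusion supplies a vector $\mathbf{u}\in\partial_{\bm{\alpha}}g(\bm{\alpha}^*)$ with $\nabla_{\bm{\alpha}}f(\bm{\omega}^*)+\mathbf{u}=\mathbf{0}$. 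Feasibility $\mathbf{N}(\mathbf{X}^*,\mathbf{Y}^*,\mathbf{Z}^*)=1$ holds because every iterate lies in the closed sets $S_1,S_2,S_3$, so their limit does as well. Assembling these equalities with the feasibility relation gives precisely the KKT system of problem (\ref{problem4}).

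An equivalent route, which I would use to make the role of the hypothesis transparent, is to pass to the limit in the first-order optimality conditions of the block subproblems already written out in the proof of Lemma \ref{gradbound}. Each update yields a multiplier $\mathbf{u}_1^n\in\partial\delta_{S_1}(\mathbf{X}^n)$, that is $\mathbf{u}_1^n=\mathbf{X}^n\mathbf{H}_1^n$ with $\mathbf{H}_1^n$ diagonal; by Lemma \ref{keylem} the successive differences tend to zero and the rescaling factors $d_n$ stay bounded, so $\mathbf{u}_1^n\to-\nabla_{\mathbf{X}}f(\bm{\omega}^*)$. The full-column-rank hypothesis on $\mathbf{X}^*,\mathbf{Y}^*,\mathbf{Z}^*$ enters here: it guarantees that the diagonal multipliers are uniquely determined and converge to a diagonal limit, since one may then write $\mathbf{H}_1=-(\mathbf{X}^{*T}\mathbf{X}^*)^{-1}\mathbf{X}^{*T}\nabla_{\mathbf{X}}f(\bm{\omega}^*)$, and so $\bm{\omega}^*$ is identified with a genuine KKT point of the constrained problem (\ref{problem4}) rather than merely a critical point of the penalized surrogate (\ref{problem5}).

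I expect the main obstacle to be exactly this identification step: justifying the sum-and-separability rule for the limiting subdifferential and, above all, pinning down the normal cone to the product of spheres as $\{\mathbf{X}^*\mathbf{H}_1:\mathbf{H}_1\text{ diagonal}\}$ and confirming that the multiplier remains diagonal in the limit. Once the normal-cone computation is settled and the full-rank assumption is used to secure uniqueness of the $\mathbf{H}_i$, the remaining manipulations that recover the four stationarity equations and the feasibility relation are routine.
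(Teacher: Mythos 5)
Your proposal is correct, and in fact it contains the paper's own argument as its second route: the paper proves the corollary exactly by passing to the limit in the first-order optimality conditions of the block subproblems, writing the multiplier of the $\mathbf{X}$-update as $\mathbf{X}^n\mathbf{H}_1^n$ with $\mathbf{H}_1^n$ diagonal, using $sd_n(\mathbf{X}^n-\mathbf{X}^{n-1})\to\mathbf{0}$, and invoking the full-column-rank hypothesis to conclude that $\mathbf{H}_1^n$ converges to a diagonal $\mathbf{H}_1^*$; the $\bm{\alpha}$-equation is read off from (\ref{gradona}) and feasibility from closedness of the constraint sets, just as you say. Your first route --- starting from the criticality $\mathbf{0}\in\partial\Psi(\bm{\omega}^*)$ guaranteed by Theorem \ref{limitpoint}, splitting by the smooth-plus-separable sum rule, and identifying $\partial\delta_{S_1}(\mathbf{X}^*)$ with the classical normal space of the product of spheres --- is a genuinely different and arguably cleaner derivation that the paper does not give. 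It has the added feature that the full-column-rank hypothesis is not needed to \emph{produce} diagonal multipliers (each column of a normal-cone element is automatically a scalar multiple of the corresponding unit column, so the representation $\mathbf{X}^*\mathbf{H}_1$ always exists); the hypothesis only buys uniqueness of the $\mathbf{H}_i$, or, in the limit-passing route, convergence of the multiplier sequence. You have correctly located where the hypothesis enters in each argument, and both routes are sound.
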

\begin{proof}
$\mathbf{N(X^*,Y^*,Z^*)}=1$ is obvious
since $\mathbf{N(X^n,Y^n,Z^n)}=1$ and the convergency of $\{\bm{\omega}^n\}_{n\in\mathbb{N}}$.
From (\ref{gradona}), there exists a vector $\mathbf{u}\in\partial_{\bm{\alpha}}g(\bm{\alpha}^*)$ such that
$$\nabla_{\bm{\alpha}}f(\bm{\omega}^*)+\mathbf{u}=\mathbf{0}.$$
By the update of $\mathbf{X}$,
there is a diagonal matrix $\mathbf{H}_1^n$ such that
\begin{equation*}
\nabla_{\mathbf{X}}f(\mathbf{X}^{n-1},\mathbf{Y}^{n-1},\mathbf{Z}^{n-1},\bm{\alpha}^{n-1})+sd_n(\mathbf{X}^n-\mathbf{X}^{n-1})+\mathbf{X}^n\mathbf{H}_1^n=\mathbf{0}.
\end{equation*}
By the convergency of $\{\bm{\omega}^n\}_{n\in\mathbb{N}}$,
we have that $\mathbf{H}_1^n$ is convergent to some diagonal matrix $\mathbf{H}_1^*$ since $\mathbf{X}^*$ has full column rank.
Furthermore, we can obtain
\begin{equation*}
\nabla_{\mathbf{X}}f(\mathbf{X}^{*},\mathbf{Y}^{*},\mathbf{Z}^{*},\bm{\alpha}^{*})+\mathbf{X}^*\mathbf{H}_1^*=\mathbf{0}.
\end{equation*}
Similarly, we have
\begin{equation*}
\begin{aligned}
\nabla_{\mathbf{Y}}f(\mathbf{X}^{*},\mathbf{Y}^{*},\mathbf{Z}^{*},\bm{\alpha}^{*})+\mathbf{Y}^*\mathbf{H}_2^*=\mathbf{0},
\nabla_{\mathbf{Z}}f(\mathbf{X}^{*},\mathbf{Y}^{*},\mathbf{Z}^{*},\bm{\alpha}^{*})+\mathbf{Z}^*\mathbf{H}_3^*=\mathbf{0}.
\end{aligned}
\end{equation*}
This completes the proof of this corollary.
\end{proof}
\section{Probabilistic consistency of the sparsity}
In this section, we will discuss the probabilistic consistency of the sparsity
of the optimal solution to problem (\ref{problem4}).
We will see that under a suitable choice on the regularization parameter,
the optimal solution can recover the true sparsity in a statistical model with a high probability.

For a given regularization parameter $\lambda>0$,
an optimal solution to problem (\ref{problem4})
is denoted by
$$(\mathbf{\hat{X},\hat{Y},\hat{Z}},\bm{\hat{\alpha}})=
\argmin\limits_{\mathbf{X,Y,Z},\bm{\alpha}}\frac{1}{2}\|\mathcal{A}-[\bm{\alpha};\mathbf{X,Y,Z}]_R\|_F^2+\lambda\|\bm{\alpha}\|_1 \quad\mbox{s.t.}\ \mathbf{N(X,Y,Z)}=1.$$
As shown in Section 4.1, we can construct a $R\times(I*J*K)$ matrix $\mathbf{\hat{Q}}=(\mathbf{\hat{q}}_1^T,\cdots,\mathbf{\hat{q}}_R^T)^T=((\mathbf{\hat{X}}\odot\mathbf{\hat{Y}})\odot\mathbf{\hat{Z}})^T$
from (\ref{update2}), and vectorize tensor $\mathcal{A}$ into a row vector $\mathbf{a}$.

For convenience, we introduce new variables: $\mathbf{b},\bm{\theta},\mathbf{B}$ for
$\mathbf{a}^T,\bm{\alpha}^T,\mathbf{\hat{Q}}^T$ respectively.
Thus $\mathbf{b}$ and $\bm{\theta}$ are column vectors with dimension $I*J*K$ and $R$,
and $\mathbf{B}$ is a $(I*J*K)\times R$ matrix.
Furthermore, we have the following equality
\begin{equation}
\frac{1}{2}\|\mathcal{A}-[\bm{\alpha};\mathbf{\hat{X},\hat{Y},\hat{Z}}]_R\|_F^2+\|\bm{\alpha}\|_1
=\frac{1}{2}\|\mathbf{b}-\mathbf{B}\bm{\theta}\|_2^2+\lambda\|\bm{\theta}\|_1.
\end{equation}
The optimal solution $\bm{\hat{\alpha}}^T$ for tensor approximation problem (\ref{problem4})
is also an optimal solution $\bm{\hat{\theta}}$ of a standard $l_1$-regularized least square problem
\begin{equation}\label{objectarg}
\mathop{\min}\limits_{\bm{\theta}}\frac{1}{2}\|\mathbf{b}-\mathbf{B}\bm{\theta}\|_2^2+\lambda\|\bm{\theta}\|_1.
\end{equation}
Assume that $\mathbf{b}$ and $\mathbf{B}$ have a sparse representation structure as
\begin{equation}\label{sparsity}
\mathbf{b}=\mathbf{B}\bm{\theta}^*+\bm{\varepsilon},
\end{equation}
where all the columns of $\mathbf{B}$ are normalized to one. The variable $\bm{\theta}^*$ is a sparse signal with $k$ non-zero entries $(k<R)$,
and $\bm{\varepsilon}$ is a vector with independent subgaussian entries of mean zero and parameter $\sigma^2$.

Denote a subgradient vector in $\partial\|\bm{\theta}\|_1$ as $\bm{\beta}=(\beta_1,\cdots,\beta_R)^T$.
The entries of $\bm{\beta}$ satisfy that for any $1\leq i\leq R$,
$\beta_i=\mbox{sgn}(\theta_i)$ if $\theta_i\neq 0$ and $\beta_i\in[-1,1]$ if $\theta_i=0$.
As shown in the Lemma 1 of \cite{wainwright}, $\bm{\hat{\theta}}$ is an optimal solution to problem (\ref{objectarg})
if and only if there exists a subgradient vector $\bm{\hat{\beta}}\in\partial\|\bm{\hat{\theta}}\|_1$ such that
\begin{equation}
-\mathbf{B}^T(\mathbf{b}-\mathbf{B}\bm{\hat{\theta}})+\lambda\bm{\hat{\beta}}=\mathbf{0}
\end{equation}
if and only if there exists a subgradient vector $\bm{\hat{\beta}}\in\partial\|\bm{\hat{\theta}}\|_1$ such that
\begin{equation}\label{lassocon}
\mathbf{B}^T\mathbf{B}(\bm{\hat{\theta}}-\bm{\theta}^*)-\mathbf{B}^T\bm{\varepsilon}+\lambda\bm{\hat{\beta}}=\mathbf{0}.
\end{equation}

Assume that $\mathbf{B}$ is a full column rank matrix.
Then the objective function in problem (\ref{objectarg}) is strictly convex,
and the optimal solution $\bm{\hat{\theta}}$ to problem (\ref{objectarg})
is unique and exact $\bm{\hat{\alpha}}^T$.
Denote $S$ and $\hat{S}$ as the index sets of non-zero entries
in $\bm{\theta}^*$ and $\bm{\hat{\theta}}$ respectively.
So the sparse signal $\bm{\theta}^*$ can be rewritten as $({\bm{\theta}_S^*}^T,\mathbf{0}^T)^T$ and the cardinality of $S$ is $k$.
We will show in Theorem \ref{probcon} that the optimal solution $\bm{\hat{\theta}}$, which is also the $\bm{\hat{\alpha}}^T$,
of problem (\ref{objectarg})
may become a suitable approximation for the real sparse
signal $\bm{\theta}^*$.
Similar results shown in \cite{wainwright,zhao} consider the case
$\mathbf{B}^T\mathbf{B}/n\rightarrow\mathbf{C}$ as $n\rightarrow\infty$ or $n^{-1/2}\max_{j\in S^c}\|\mathbf{B}_j\|\leq 1$ where $n$ is the number of rows in $\mathbf{B}$,
while in this paper all the $\mathbf{B}_j$ are normalized to one.
We can further obtain a specific probability bound shown in Theorem \ref{probcon}, which relies only on two intrinsic parameters of model.

According to the unknown set $S$,
we can separate columns of the design matrix $\mathbf{B}$
as two parts $(\mathbf{B}_S,\mathbf{B}_{S^C})$, where $S^C$ is the complement of $S$.
Moreover, since $\mathbf{B}_S$ also have full column rank,
there exists a unique solution $\bm{\hat{\theta}}_S$ by solving the restricted Lasso problem:\\
\begin{equation}\label{locop}
\mathop{\min}\limits_{\bm{\theta}_S}\frac{1}{2}\|\mathbf{b}-\mathbf{B}_S\bm{\theta}_S\|_2^2+\lambda\|\bm{\theta}_S\|_1.
\end{equation}
If furthermore $({\bm{\hat{\theta}}_S}^T,\mathbf{0}^T)^T$ satisfies the equation (\ref{lassocon}),
thus $({\bm{\hat{\theta}}_S}^T,\mathbf{0}^T)^T$ is the unique optimal solution $\bm{\hat{\theta}}$ to problem (\ref{objectarg})
since $\mathbf{B}$ has full column rank.
Moreover, we also obtain that the index set $\hat{S}\subseteq S$.
From (\ref{lassocon}), if $\bm{\hat{\theta}}_S$ satisfies two equations:
\begin{equation}\label{req1}
\mathbf{B}_S^T\mathbf{B}_S(\bm{\hat{\theta}}_S-\bm{\theta}_S^*)-\mathbf{B}_S^T\bm{\varepsilon}+\lambda\bm{\hat{\beta}}_S=\mathbf{0}
\end{equation} and
\begin{equation}\label{req2}
\mathbf{B}_{S^C}^T\mathbf{B}_S(\bm{\hat{\theta}}_S-\bm{\theta}_S^*)-\mathbf{B}_{S^C}^T\bm{\varepsilon}+\lambda\bm{\hat{\beta}}_{S^C}=\mathbf{0},
\end{equation}
where $\bm{\hat{\beta}}_S\in\partial\|\bm{\hat{\theta}}_S\|_1$ and
$\|\bm{\hat{\beta}}_{S^C}\|_{\infty}=\max\limits_{j\in S^C}|\bm{\hat{\beta}}_{j}|<1$,
we have that
$\bm{\hat{\theta}}=({\bm{\hat{\theta}}_S}^T,\mathbf{0}^T)^T$ satisfies the equation (\ref{lassocon})
and $(\bm{\hat{\beta}}_S^T,\bm{\hat{\beta}}_{S^C}^T)^T\in\partial\|\bm{\hat{\theta}}\|_1$.
Actually, since $\bm{\hat{\theta}}_S$ minimizes the problem (\ref{locop}),
there exists $\bm{\hat{\beta}}_S\in\partial\|\bm{\hat{\theta}}_S\|_1$
such that the equation (\ref{req1}) holds.
So if it happens with a high probability that the equation (\ref{req2}) holds and $\|\bm{\hat{\beta}}_{S^C}\|_{\infty}<1$,
thus the event $\mathbf{\Gamma}=\{({\bm{\hat{\theta}}_S}^T,\mathbf{0}^T)^T \mbox{is the unique optimal solution}\ \bm{\hat{\theta}}\ \mbox{to problem} (\ref{objectarg})\}$ happens with a high probability. Furthermore, the event $\{\hat{S}\subseteq S\}$ also happens with a high probability.
We are going to show these in the the following part of this section.

From equations (\ref{req1}) and (\ref{req2}), we have that:
\begin{equation}
\bm{\hat{\beta}}_{S^C}=\mathbf{B}_{S^C}^T\mathbf{B}_S(\mathbf{B}_S^T\mathbf{B}_S)^{-1}\bm{\hat{\beta}}_S+
\mathbf{B}_{S^C}^T(\mathbf{I}-\mathbf{B}_S(\mathbf{B}_S^T\mathbf{B}_S)^{-1}\mathbf{B}_S^T)\frac{\bm{\varepsilon}}{\lambda},
\end{equation}
\begin{equation}
\bm{\delta}_S=\bm{\hat{\theta}}_S-\bm{\theta}_S^*=(\mathbf{B}_S^T\mathbf{B}_S)^{-1}(\mathbf{B}_S^T\bm{\varepsilon}-\lambda\bm{\hat{\beta}}_S).
\end{equation}
For any $j\in S^C$,
we have that
$$\hat{\beta}_j=\mathbf{B}_j^T\mathbf{B}_S(\mathbf{B}_S^T\mathbf{B}_S)^{-1}\bm{\hat{\beta}}_S+
\mathbf{B}_j^T(\mathbf{I}-\mathbf{B}_S(\mathbf{B}_S^T\mathbf{B}_S)^{-1}\mathbf{B}_S^T)\frac{\bm{\varepsilon}}{\lambda}=\mu_j+\omega_j.$$

We assume that there exists an incoherence parameter $\gamma\in(0,1]$ such that
$\|\mathbf{B}_{S^C}^T\mathbf{B}_S(\mathbf{B}_S^T\mathbf{B}_S)^{-1}\|_{\infty}\leq 1-\gamma$,
where matrix norm $\|M\|_{\infty}=\max\limits_{i}\sum\limits_{j}|M_{ij}|$.
It is easy to obtain $|\mu_j|=|\mathbf{B}_j^T\mathbf{B}_S(\mathbf{B}_S^T\mathbf{B}_S)^{-1}\bm{\hat{\beta}}_S|\leq \|\mathbf{B}_{S^C}^T\mathbf{B}_S(\mathbf{B}_S^T\mathbf{B}_S)^{-1}\|_{\infty}\leq 1-\gamma$.
And let us consider $\omega_j=\mathbf{B}_j^T(\mathbf{I}-\mathbf{B}_S(\mathbf{B}_S^T\mathbf{B}_S)^{-1}\mathbf{B}_S^T)\frac{\bm{\varepsilon}}{\lambda}=
\frac{1}{\lambda}(c_1\varepsilon_1+\cdots+c_n\varepsilon_n)$, where $(c_1,\cdots,c_n)=\mathbf{B}_j^T(\mathbf{I}-\mathbf{B}_S(\mathbf{B}_S^T\mathbf{B}_S)^{-1}\mathbf{B}_S^T)$.
Thus $\omega_j$ is a subgaussian distribution with zero mean and parameter $\frac{\sigma^2}{\lambda}(c_1^2+\cdots+c_n^2)=
\frac{\sigma^2}{\lambda^2}\mathbf{B}_j^T(\mathbf{I}-\mathbf{B}_S(\mathbf{B}_S^T\mathbf{B}_S)^{-1}\mathbf{B}_S^T)\mathbf{B}_j$.
Since $\mathbf{B}_j^T\mathbf{B}_j=1$, this parameter is no more than $\frac{\sigma^2}{\lambda^2}$.
So $Pr(\max\limits_{j\in S^C}|\omega_j|\geq t)\leq 2(R-k)\exp(-\frac{\lambda^2t^2}{2\sigma^2})$, where $k$ is the cardinality of $S$.
By choosing $t=\frac{1}{2}\gamma$, we have that $Pr(\max\limits_{j\in S^C}|\omega_j|\geq \frac{1}{2}\gamma)\leq 2(R-k)\exp(-\frac{\lambda^2\gamma^2}{8\sigma^2})$.
Thus we have that
\begin{equation}\label{inequ1}
Pr(\max_{j\in S^C}|\hat{\beta}_j|>1-\frac{\gamma}{2})\leq Pr(\max\limits_{j\in S^C}|\omega_j|\geq \frac{1}{2}\gamma)\leq 2(R-k)\exp(-\frac{\lambda^2\gamma^2}{8\sigma^2}).
\end{equation}

Now let us consider about the upper bound of $\bm{\delta}_S$: $\|\bm{\delta}_S\|_{\infty}\leq \|(\mathbf{B}_S^T\mathbf{B}_S)^{-1}\mathbf{B}_S^T\bm{\varepsilon}\|_{\infty}+
\lambda\|(\mathbf{B}_S^T\mathbf{B}_S)^{-1}\|_{\infty}$.
Since $\lambda\|(\mathbf{B}_S^T\mathbf{B}_S)^{-1}\|_{\infty}$ has a fixed value, we only need to consider the first term.
For any $i\in S$, we have that $v_i=\mathbf{e}_i^T(\mathbf{B}_S^T\mathbf{B}_S)^{-1}\mathbf{B}_S^T\bm{\varepsilon}=c_1\varepsilon_1+\cdots+c_n\varepsilon_n$,
where $(c_1,\cdots,c_n)=\mathbf{e}_i^T(\mathbf{B}_S^T\mathbf{B}_S)^{-1}\mathbf{B}_S^T$.
If we assume that $\lambda_{min}(\mathbf{B}_S^T\mathbf{B}_S)\geq\mu$,
thus $v_i$ is a subgaussian distribution with
zero mean and parameter $\frac{\sigma^2}{\lambda}(c_1^2+\cdots+c_n^2)=\sigma^2\mathbf{e}_i^T(\mathbf{B}_S^T\mathbf{B}_S)^{-1}\mathbf{e}_i\leq\frac{\sigma^2}{\mu}$.
Thus $Pr(\max\limits_{i\in S}|v_i|>t)\leq 2k\exp(-\frac{t^2\mu}{2\sigma^2})$.
By choosing $t=\frac{\lambda}{2\sqrt{\mu}}$,
we have that \\
\begin{equation}\label{inequ2}
Pr(\max\limits_{i\in S}|v_i|>\frac{\lambda}{2\sqrt{\mu}})\leq 2k\exp(-\frac{\lambda^2}{8\sigma^2})\leq 2k\exp(-\frac{\lambda^2\gamma^2}{8\sigma^2}).
\end{equation}

By combining (\ref{inequ1}) and (\ref{inequ2}),
we have the probability inequality $Pr(\{\max\limits_{j\in S^C}|\hat{\beta}_j|>1-\frac{\gamma}{2}\}\bigcup\{\max\limits_{i\in S}|v_i|>\frac{\lambda}{2\sqrt{\mu}}\})\leq 2R\exp(-\frac{\lambda^2\gamma^2}{8\sigma^2})$.
Thus the probability inequality on the complementary set is that
$$Pr(\{\max\limits_{j\in S^C}|\hat{\beta}_j|\leq 1-\frac{\gamma}{2}\}\bigcap\{\max\limits_{i\in S}|v_i|\leq\frac{\lambda}{2\sqrt{\mu}}\})\geq 1-2R\exp(-\frac{\lambda^2\gamma^2}{8\sigma^2}).$$
Furthermore, we have that
\begin{equation}\label{orgconsist}
Pr(\mathbf{\Gamma}\bigcap\{\|\bm{\delta}_S\|_{\infty}\leq\frac{\lambda}{2\sqrt{\mu}}+\lambda\|(\mathbf{B}_S^T\mathbf{B}_S)^{-1}\|_{\infty}\})
\geq 1-2R\exp(-\frac{\lambda^2\gamma^2}{8\sigma^2}),
\end{equation}
where $\mathbf{\Gamma}=\{({\bm{\hat{\theta}}_S}^T,\mathbf{0}^T)^T \mbox{is the unique optimal}$
$\mbox{solution}\ \bm{\hat{\theta}}\ \mbox{to problem}\ (\ref{objectarg})\}$.

From the above discussion, we obtain the following Theorem \ref{probcon}, which illustrates the probabilistic consistency of
the optimal solution $\bm{\hat{\theta}}$ to problem (\ref{objectarg}).

\begin{theorem}\label{probcon}
Suppose that the sparse structure (\ref{sparsity}) exists,
the sparse signal $\bm{\theta}^*=({\bm{\theta}_S^*}^T,\mathbf{0}^T)^T$ and $\mathbf{B}$ has full column rank.
If there exist some parameters $\gamma$ and $\mu$ where  $0<\gamma<1$ and  $\mu>0$ such that
$\|\mathbf{B}_{S^C}^T\mathbf{B}_S(\mathbf{B}_S^T\mathbf{B}_S)^{-1}\|_{\infty}\leq 1-\gamma$
and $\lambda_{min}(\mathbf{B}_S^T\mathbf{B}_S)\geq\mu$,
we have that
\begin{equation}\label{consist}
Pr(\{\hat{S}\subseteq S\}\bigcap\{\|\bm{\delta}_S\|_{\infty}\leq\frac{\lambda}{2\sqrt{\mu}}+\lambda\|(\mathbf{B}_S^T\mathbf{B}_S)^{-1}\|_{\infty}\})
\geq 1-2R\exp(-\frac{\lambda^2\gamma^2}{8\sigma^2}),
\end{equation}
where $\hat{S}$ is the index set of non-zero entries in $\bm{\hat{\theta}}$, and
$\bm{\delta}_S=\bm{\hat{\theta}}_S-\bm{\theta}_S^*$ and
$\bm{\hat{\theta}}_S$ is the optimal solution of (\ref{locop}).
Furthermore, if the lower bound of the absolute values of elements in $\bm{\theta}_S^*$ is larger than $\lambda(\frac{1}{2\sqrt{\mu}}+\|(\mathbf{B}_S^T\mathbf{B}_S)^{-1}\|_{\infty})$,
we have that
\begin{equation}\label{consist2}
Pr(\{\hat{S}=S\})\geq 1-2R\exp(-\frac{\lambda^2\gamma^2}{8\sigma^2}).
\end{equation}
\end{theorem}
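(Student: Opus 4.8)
The plan is to assemble the probability estimate (\ref{orgconsist}), already established in the discussion preceding the theorem, and to convert it into the two stated guarantees by purely deterministic set-inclusion arguments on the ``good'' event
$$
\mathbf{G}=\mathbf{\Gamma}\cap\Big\{\|\bm{\delta}_S\|_{\infty}\leq\frac{\lambda}{2\sqrt{\mu}}+\lambda\|(\mathbf{B}_S^T\mathbf{B}_S)^{-1}\|_{\infty}\Big\}.
$$
The hypotheses $\|\mathbf{B}_{S^C}^T\mathbf{B}_S(\mathbf{B}_S^T\mathbf{B}_S)^{-1}\|_{\infty}\leq 1-\gamma$ and $\lambda_{min}(\mathbf{B}_S^T\mathbf{B}_S)\geq\mu$ are precisely what drive the subgaussian tail bounds (\ref{inequ1}) and (\ref{inequ2}), and hence yield $Pr(\mathbf{G})\geq 1-2R\exp(-\lambda^2\gamma^2/(8\sigma^2))$ via (\ref{orgconsist}).

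First, for (\ref{consist}) I would observe the inclusion $\mathbf{\Gamma}\subseteq\{\hat{S}\subseteq S\}$. Indeed, on $\mathbf{\Gamma}$ the vector $({\bm{\hat{\theta}}_S}^T,\mathbf{0}^T)^T$ is the unique optimal solution $\bm{\hat{\theta}}$ of problem (\ref{objectarg}); since this vector is zero on $S^C$ by construction, its support satisfies $\hat{S}\subseteq S$, and full column rank of $\mathbf{B}$ rules out any competing minimizer with support outside $S$. Consequently
\begin{equation*}
Pr\Big(\{\hat{S}\subseteq S\}\cap\big\{\|\bm{\delta}_S\|_{\infty}\leq\tfrac{\lambda}{2\sqrt{\mu}}+\lambda\|(\mathbf{B}_S^T\mathbf{B}_S)^{-1}\|_{\infty}\big\}\Big)\geq Pr(\mathbf{G})\geq 1-2R\exp\Big(-\frac{\lambda^2\gamma^2}{8\sigma^2}\Big),
\end{equation*}
which is exactly (\ref{consist}).

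Next, for (\ref{consist2}) I would work on the same event $\mathbf{G}$ and use the additional lower bound on the nonzero entries of $\bm{\theta}_S^*$ to exclude false negatives. On $\mathbf{G}$ we have simultaneously $\hat{S}\subseteq S$ and $\|\bm{\delta}_S\|_{\infty}\leq\frac{\lambda}{2\sqrt{\mu}}+\lambda\|(\mathbf{B}_S^T\mathbf{B}_S)^{-1}\|_{\infty}$. For each $i\in S$, the reverse triangle inequality gives $|\hat{\theta}_i|\geq|\theta_i^*|-|\delta_i|\geq|\theta_i^*|-\|\bm{\delta}_S\|_{\infty}$. Since by hypothesis $\min_{i\in S}|\theta_i^*|>\lambda(\frac{1}{2\sqrt{\mu}}+\|(\mathbf{B}_S^T\mathbf{B}_S)^{-1}\|_{\infty})\geq\|\bm{\delta}_S\|_{\infty}$, we get $|\hat{\theta}_i|>0$, hence $i\in\hat{S}$; thus $S\subseteq\hat{S}$, and together with $\hat{S}\subseteq S$ this forces $\hat{S}=S$ on $\mathbf{G}$. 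Therefore $Pr(\hat{S}=S)\geq Pr(\mathbf{G})\geq 1-2R\exp(-\lambda^2\gamma^2/(8\sigma^2))$, which is (\ref{consist2}).

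The remaining work is routine once (\ref{orgconsist}) is in hand; the one conceptual point requiring care is the first step, namely the inclusion $\mathbf{\Gamma}\subseteq\{\hat{S}\subseteq S\}$. This rests on the primal-dual witness logic laid out before the theorem: the candidate $({\bm{\hat{\theta}}_S}^T,\mathbf{0}^T)^T$ built from the restricted problem (\ref{locop}) satisfies the global optimality condition (\ref{lassocon}) exactly on $\mathbf{\Gamma}$, and uniqueness (from full column rank) then identifies it as the genuine minimizer of (\ref{objectarg}) with support contained in $S$. I expect this identification to be the main obstacle, as everything else follows deterministically on $\mathbf{G}$.
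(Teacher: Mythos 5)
Your proposal is correct and follows essentially the same route as the paper's own proof: both deduce (\ref{consist}) from the inclusion $\mathbf{\Gamma}\subseteq\{\hat{S}\subseteq S\}$ applied to the already-established bound (\ref{orgconsist}), and both obtain (\ref{consist2}) by noting that the $\ell_\infty$ bound on $\bm{\delta}_S$ together with the lower bound on $\min_{i\in S}|\theta_i^*|$ forces every entry of $\bm{\hat{\theta}}_S$ on $S$ to be nonzero (the paper phrases this as the entries sharing the sign of $\bm{\theta}_S^*$, which is equivalent to your reverse-triangle-inequality step). No gaps; your write-up is if anything slightly more explicit than the paper's.
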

\begin{proof}
In terms of (\ref{orgconsist}), the first inequality (\ref{consist}) follows from $\{\hat{S}\subseteq S\}\supseteq\mathbf{\Gamma}$,
where $\mathbf{\Gamma}=\{({\bm{\hat{\theta}}_S}^T,\mathbf{0}^T)^T \mbox{is the unique optimal solution}\ \bm{\hat{\theta}}
\ \mbox{to problem}\ (\ref{objectarg})\}$.

If $\|\bm{\hat{\theta}}_S-\bm{\theta}_S^*\|_{\infty}=\|\bm{\delta}_S\|_{\infty}\leq
\frac{\lambda}{2\sqrt{\mu}}+\lambda\|(\mathbf{B}_S^T\mathbf{B}_S)^{-1}\|_{\infty}$
and the lower bound of the absolute values of elements in $\bm{\theta}_S^*$ is larger than $\frac{\lambda}{2\sqrt{\mu}}+\lambda\|(\mathbf{B}_S^T\mathbf{B}_S)^{-1}\|_{\infty}$,
it can be checked that the entries in $\bm{\hat{\theta}}_S$
and $\bm{\theta}_S^*$ of the same index have the same sign.
From (\ref{orgconsist}),
we can obtain the second inequality (\ref{consist2}).
\end{proof}

Theorem \ref{probcon} tells us that if we want to recover the sparsity in (\ref{sparsity}) with a probability $p$,
we should choose a $\lambda$ such that $1-2R\exp(-\frac{\lambda^2\gamma^2}{8\sigma^2})>p$
when we know the intrinsic parameters $\gamma$ and $\sigma^2$.
So to adaptively give a regularization parameter $\lambda$ based on the data $\mathcal{A}$,
we need to give two guesses on the intrinsic parameters $\gamma$ and $\sigma^2$.
We set $\lambda$ to zero in the Algorithm \ref{alg:Framwork}, and compute a estimated tensor $\mathcal{\hat{B}}=[\bm{\hat{\alpha}};\mathbf{\hat{X}},\mathbf{\hat{Y}},\mathbf{\hat{Z}}]_R$ from the tensor data $\mathcal{A}$.
The parameter $\sigma^2$ is estimated by using the variance $\hat{\sigma}^2$ of all the entries in the difference $\mathcal{A}-\mathcal{\hat{B}}$,
and the parameter $\gamma$ is set as $\hat{\gamma}=1-\max\{|\langle\mathbf{B}_i,\mathbf{B}_j\rangle||i\neq j\}$,
where $\mathbf{B}_i$ is the $i$-th column in $\mathbf{B}=(\mathbf{\hat{X}}\odot\mathbf{\hat{Y}})\odot\mathbf{\hat{Z}}$.
With regularization parameter $\hat{\lambda}=\frac{2}{\hat{\gamma}}\sqrt{2\hat{\sigma}^2\log(200R)}$,
the result of our algorithm is shown by using the simulated and real data in the next Section.

\section{Numerical Experiment}
In this section, we have four types of numerical experiments for testing the performance of our algorithm.
The codes of the first three experiments are written in Matlab with simulated data.
In all the simulations, the initial guesses are randomly generated. The stopping criterion used in the all experiments depends on two parameters:
one is the upper bound of the number of iteration iteration number (eg. iter\_max$=10000$),
and the other is a tolerance to decide whether convergence has been reached (eg. conv\_tol$=e^{-10}$).
The fourth numerical experiment is executed in C++ with OpenCV for surveillance video data. These experiments ran on a laptop computer with Intel i5 CPU 3.3GHz and 8G memory.

\subsection{Estimated rank}
We randomly create a tensor $\mathcal{A}\in\mathbb{R}^{10\times 10\times 10}$ with $5$ rank-one components,
and then use LRAT to estimate the rank of $\mathcal{A}$
along with the increment of the regularization parameter.
The upper bound $R$ of $\mbox{rank}(\mathcal{A})$ is fixed to $10$ in the algorithm
while the regularization parameter $\lambda$ varies from $0$ to $0.1$ by step $0.001$.
As shown in Figure \ref{fig.ex1}, the estimated rank $\hat{R}$ has a decreasing trend
as the parameter $\lambda$ increases for these particular random tensor examples.
Heuristically, the reason for this trend lies is in the minimization the objective function in (\ref{problem4}),
an increase in $\lambda$ reduces the value of $\|\bm{\hat{\alpha}}\|_1$ and thus the estimated rank $\hat{R}$.

\begin{figure}
\begin{center}
\begin{overpic}[scale=.5,tics=10]%
{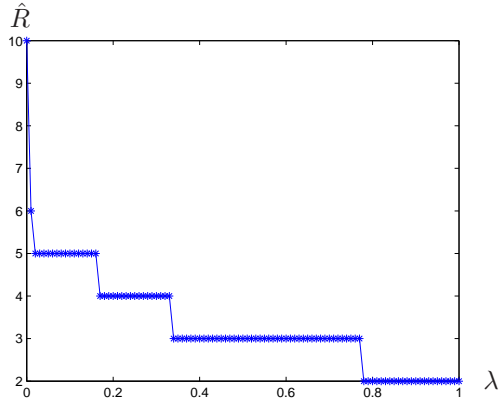}
\put(95,7){$\lambda$}
\put(10,72){$\hat{R}$}
\end{overpic}
\end{center}
\caption{Trend of the estimated rank $\hat{R}$.}\label{fig.ex1}
\end{figure}

\subsection{Accuracy of the estimated rank}
We randomly generate three kinds of tensors with various dimensions and
various rank-one component numbers (cn).
The estimated rank $\hat{R}$ is calculated with the regularization parameter
$\hat{\lambda}=\frac{2}{\hat{\gamma}}\sqrt{2\hat{\sigma}^2\log(200R)}$,
where $\hat{\sigma}^2$ and $\hat{\gamma}$ are computed as discussed in Section 5.
Table \ref{tab:1} shows the mean and standard deviation of the estimated rank.

\begin{table}[!htbp]
\caption{Mean and standard deviation of the estimated rank $\hat{R}$.}\label{tab:1}
\begin{center}
\begin{tabular}{|c|c|c|c|c|c|c|c|}
\hline
                    & $I=J=K=5$    & $I=J=K=10$      & $I=J=K=20$      \\ \hline
        $cn=2$        & 2.28 (0.87) & 3.25 (1.31) & 5.41 (1.85) \\\hline
        $cn=3$          & 3.15 (0.93) & 4.49 (1.12)& 7.2 (2.06)  \\\hline
        $cn=4$           & 3.6 (0.92)& 5.18 (1.16) & 8.35 (1.82) \\\hline
        $cn=5$        &*&   5.77 (1.29)& 9.98 (1.60)                 \\\hline
        $cn=8$        &*&   7.52 (1.01)&  10.88 (1.51)               \\\hline
        $cn=10$       &*&     *& 11.69 (1.50)             \\\hline
        $cn=15$       &*&     *&   14.11 (1.43)           \\\hline
\end{tabular}
\end{center}
\end{table}

For each component number $cn=2,3,4$,
we randomly generate $100$ tensors in $\mathbb{R}^{5\times 5\times 5}$
with $I=J=K=5$,
and then use the LRAT with the upper bound $R=5$ to compute the estimated rank $\hat{R}$.
As shown in Table \ref{tab:1},
when the rank-one component number $cn=3$,
the average estimation difference of $\hat{R}-cn$ is $0.15$ and the standard deviation of $\hat{R}$ is $0.93$.

Similarly, for each component number $cn=2,3,4,5,8$,
we randomly generate $100$ tensors in $\mathbb{R}^{10\times 10\times 10}$
and for $cn=2,3,4,5,8,10,15$, we randomly generate $100$ tensors in $\mathbb{R}^{20\times 20\times 20}$.
The upper bound $R$ is set to $I=10,20$.
The mean and standard deviation of $\hat{R}$ are shown in the last two columns of Table \ref{tab:1}.

\subsection{Comparison between LRAT and modALS}
In this subsection, we show the comparison between LRAT and modALS \cite{xu} on a toy model.
A tensor $\mathcal{A}$ in $\mathbb{R}^{5\times 5\times 5}$ is randomly generated with 3 rank-one components.
Based on the residual function, the modALS algorithm approximates tensor $\mathcal{A}$ by a tensor of five rank-one components,
while the LRAT algorithm solves (\ref{problem4}) and obtain an estimate on the rank of tensor $\mathcal{A}$. 

\begin{figure*}[!htbp]
\centering
\subfigure[Residual of LRAT and modALS] {\includegraphics[height=1.8in,width=2.5in]{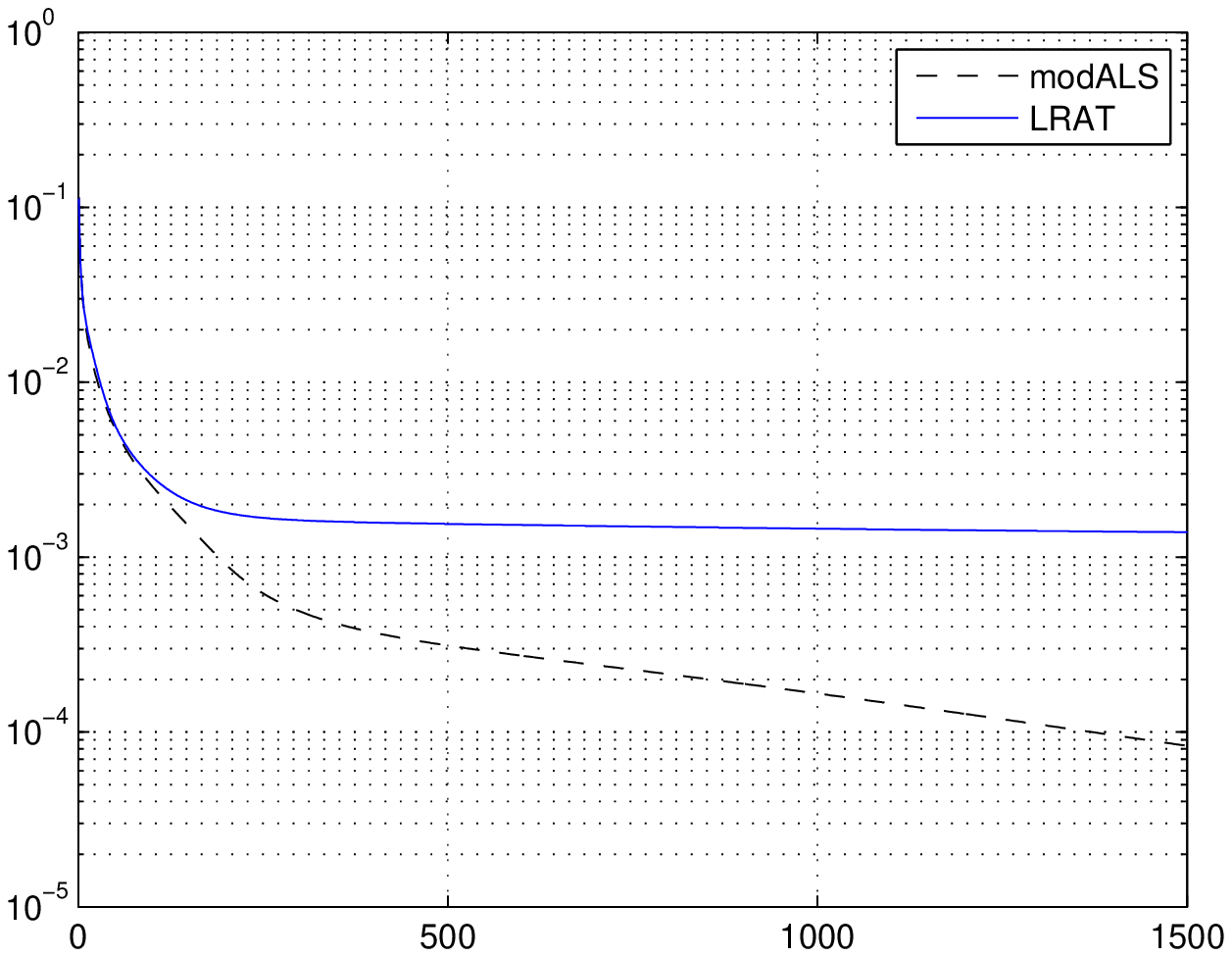}}
\subfigure[Objective function of LRAT] {\includegraphics[height=1.8in,width=2.5in]{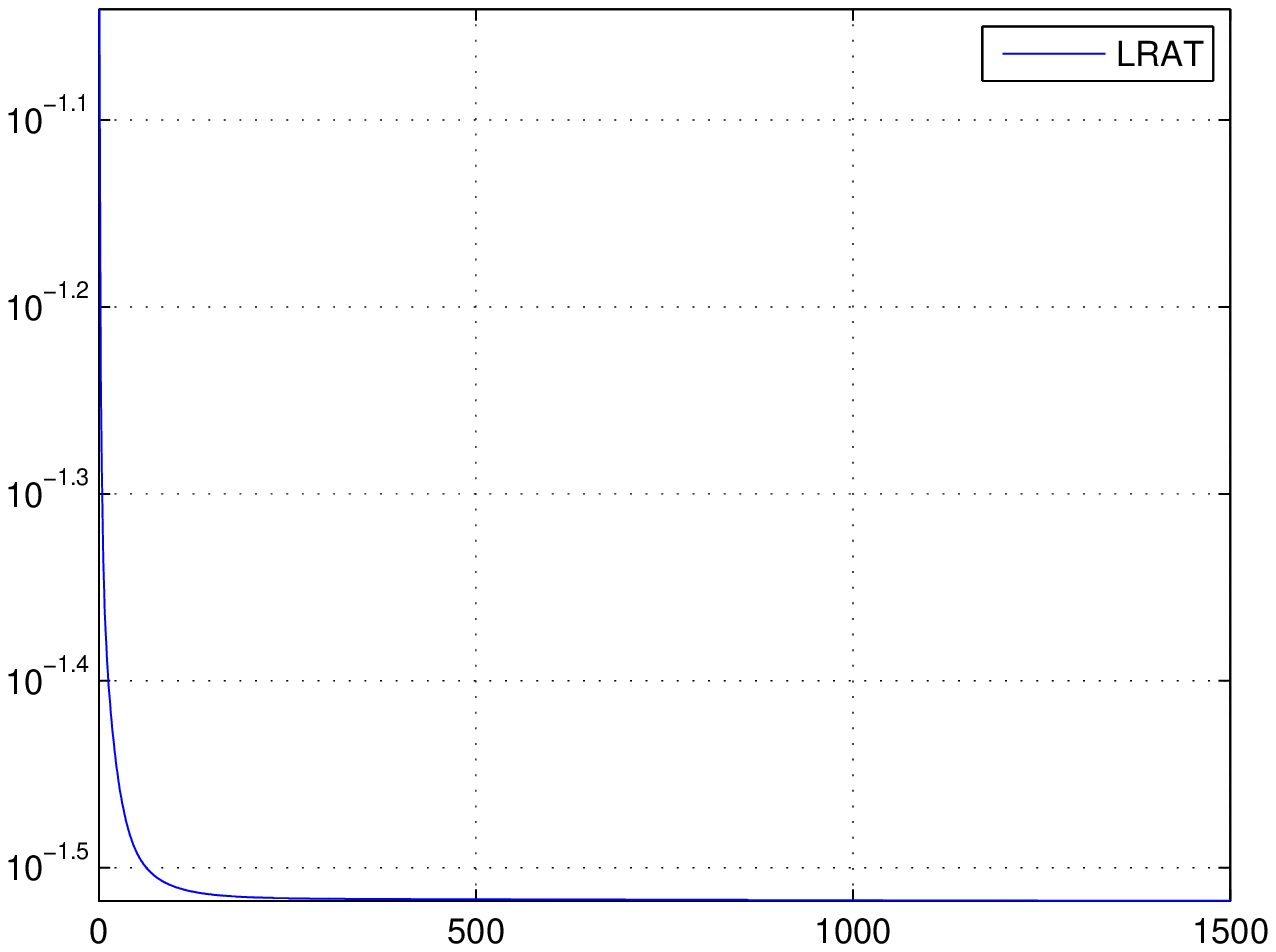}}
\caption{Comparison between LRAT and modALS.}\label{fig:comparison}
\end{figure*}

Figure \ref{fig:comparison} (a) demonstrates the residual function
$\|\mathcal{A}-[\hat{\bm{\alpha}};\hat{\mathbf{X}},\hat{\mathbf{Y}},\hat{\mathbf{Z}}]_R\|_F^2$
for the modALS and LRAT algorithms.
Compared to the LRAT, the modALS executed by using five rank-one components decreases monotonically and has a low misfit on the residual function.
Figure \ref{fig:comparison} (b) shows the objective function
$\frac{1}{2}\|\mathcal{A}-[\hat{\bm{\alpha}};\hat{\mathbf{X}},\hat{\mathbf{Y}},\hat{\mathbf{Z}}]_R\|_F^2+\hat{\lambda}\|\hat{\bm{\alpha}}\|_1$ in (\ref{problem4})
for the LRAT algorithm.
It decreases monotonically and provides an estimate on the number of rank-one components as shown in Section 6.2,
though the LRAT gave one less digit accuracy in the residual.

\subsection{Application in surveillance video}
Grayscale video data is a natural candidate for third-order tensors.
Due to the correlation between subsequent frames of video, there exists some potential low-rank mechanism in the data.
In this subsection, we apply the LRAT and the modALS to two surveillance videos\footnote{The original data is from http://perception.i2r.a-star.edu.sg/bk\_model/bk\_index.html} on Fountain and Lobby.
For each video of $220$ consecutive frames, we choose a region of interest with a resolution $30\times 30$.

\begin{figure*}[htb]
\centering
\subfigure[20 frames for Fountain] {\includegraphics[height=1.28in,width=1.6in]{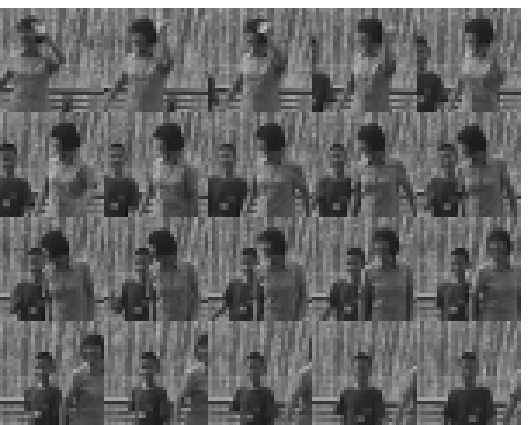}}
\subfigure[Results from LRAT] {\includegraphics[height=1.28in,width=1.6in]{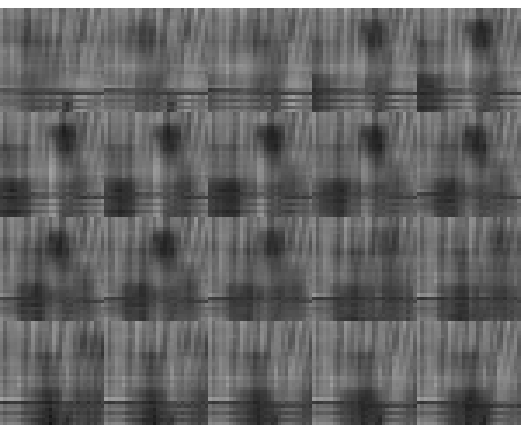}}
\subfigure[Results from modALS] {\includegraphics[height=1.28in,width=1.6in]{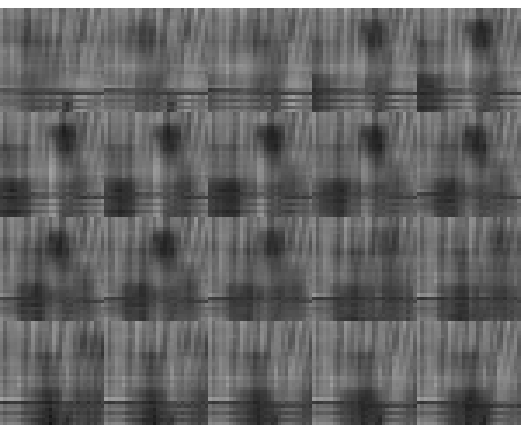}}
\subfigure[20 frames for Lobby] {\includegraphics[height=1.28in,width=1.6in]{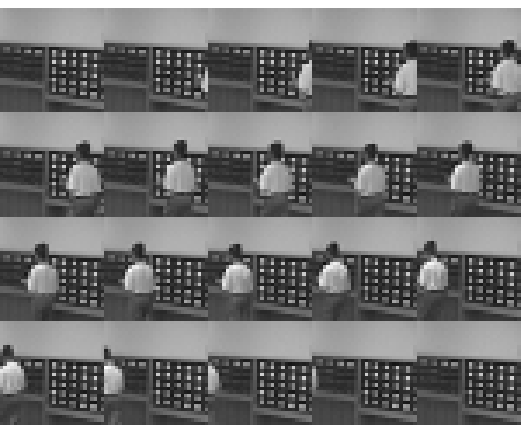}}
\subfigure[Results from LRAT] {\includegraphics[height=1.28in,width=1.6in]{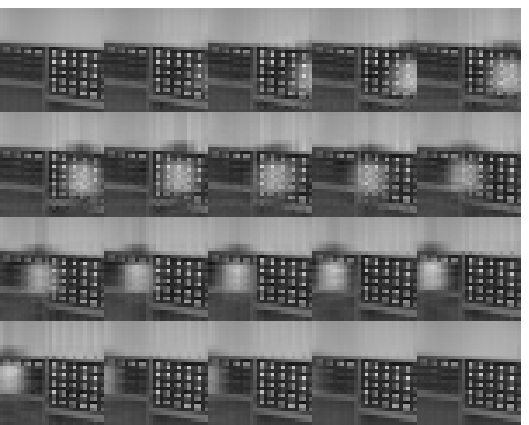}}
\subfigure[Results from modALS] {\includegraphics[height=1.28in,width=1.6in]{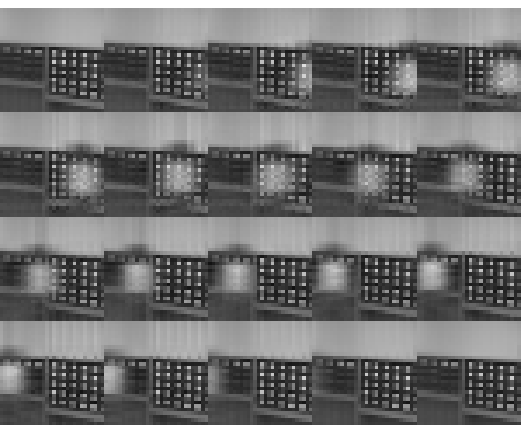}}
\caption{Computation results based on LRAT and  modALS.}\label{fig:modALS}
\end{figure*}

Figure \ref{fig:modALS} demonstrates simulation results on the LRAT and the modALS.
Here the upper bound $R$ is fixed to $400$.
Figure \ref{fig:modALS} shows 20 frames in the original video data $\mathcal{A}$ and those
frames estimated by the LRAT and the modALS.
The modALS provides an approximation with three factor matrices of $(30+30+220)\times 400$ elements.
For the LRAT algorithm, the regularization parameter $\hat{\lambda}$ is set to $\frac{2}{\hat{\gamma}}\sqrt{2\hat{\sigma}^2\log(200R)}$,
where $\hat{\sigma}^2$ and $\hat{\gamma}$ are computed as discussed in Section 5.
The estimated number of rank-one components in $\hat{\mathcal{B}}$ is $378$ for the Fountain video.
The representation of $\hat{\mathcal{B}}$ with three factor matrices only needs $(30+30+220)\times 378$ elements.
The estimated number of rank-one components is $392$ for the Lobby video,
and the representation with three factor matrices needs $(30+30+220)\times 392$ elements.
Compared to the modALS algorithm, the LRAT has a smaller estimated rank but it sacrifices more cpu time,
because the LRAT algorithm requires an instructive (a starter) choice on $\hat{\lambda}$.
For this case, we used the modALS algorithm to obtain a starter choice $\hat{\lambda}$ for LRAT.


\section{Conclusion and future work}
We propose an algorithm based on the proximal alternating minimization to detect the rank of tensors.
This algorithm comes from the understanding of the low-rank approximation of
tensors from sparse optimization.
We also provide some theoretical guarantees on the convergence of this algorithm
and a probabilistic consistency of the approximation result.
Moreover, we suggest a way to choose a regularization parameter for practical computation.
The simulation studies suggested that our algorithm can be used to detect the number of rank-one components in tensors.

The work presented in this paper have potential applications and extensions, especially in video processing
and latent component number estimation.
The ongoing work is to apply this low-rank approximation method
to moving object detection and video data compression.

\section{Acknowledgements}
This work is in part supported by the National Natural Foundation of China 11401092.

\end{document}